    \newcommand{\href}[2]{#2}
\newtheorem{theorem}{Theorem}[section]
\newtheorem{corollary}[theorem]{Corollary}
\newtheorem{lemma}[theorem]{Lemma}
\newtheorem{assumption}[theorem]{Assumption}
\newtheorem{example}[theorem]{Example}
\newtheorem{remark}[theorem]{Remark}
\numberwithin{equation}{section}  
  \newcounter{mnote}
  \let\oldmarginpar\marginpar
    \renewcommand\marginpar[1]{\-\oldmarginpar[\raggedleft\footnotesize #1]%
    {\raggedright\footnotesize #1}}
\definecolor{myblue}{rgb}{0.2,0.2,0.7}
\definecolor{mygreen}{rgb}{0,0.6,0}
\definecolor{mycyan}{rgb}{0,0.6,0.6}
\definecolor{myred}{rgb}{0.9,0.2,0.2}
\definecolor{mymagenta}{rgb}{0.9,0.2,0.9}
\definecolor{mywhite}{rgb}{1.0,1.0,1.0}
\definecolor{myblack}{rgb}{0.0,0.0,0.0}
\newcommand{\beq}{\begin{equation}}
\newcommand{\eeq}{\end{equation}}
\newcommand{\beqa}{\begin{eqnarray}}
\newcommand{\eeqa}{\end{eqnarray}}
\newcommand{\tbar}{|\hspace*{-0.15em}|\hspace*{-0.15em}|}
\newcommand{\leqs}{\leqslant}      
\newcommand{\geqs}{\geqslant}      
\newcommand{\R}{{\mathbb R}}       
\newcommand{\cT}{{\mathcal T}}
\begin{document}

\title[Two-Grid Methods for Semilinear Interface Problems]
      {Two-Grid Methods for Semilinear Interface problems}

\author[M. Holst]{Michael Holst}
\email{mholst@math.ucsd.edu}

\author[R. Szypowski]{Ryan Szypowski}
\email{rsszypowski@csupomona.edu}

\author[Y. Zhu]{Yunrong Zhu}
\email{zhu@math.ucsd.edu}

\address{Department of Mathematics\\
         University of California San Diego\\ 
         La Jolla CA 92093}
\address{Department of Mathematics and Statistics\\
         California State Polytechnic University, Pomona\\ 
         Pomona CA 91768}

\thanks{MH was supported in part by NSF Awards~0715146 and 0915220,
and by DOD/DTRA Award HDTRA-09-1-0036.}
\thanks{RS and YZ were supported in part by NSF Award~0715146.}

\date{\today}

\keywords{Interface problems, two-grid methods, semi-linear partial differential equations, Poisson-Boltzmann equation,  {\em a priori} $L^{\infty}$ estimates, Galerkin methods, discrete {\em a priori} $L^{\infty}$ estimates, quasi-optimal {\em a priori} error estimates}

\begin{abstract}
In this article we consider two-grid finite element methods for solving 
semilinear interface problems in $d$ space dimensions, for $d=2$ or $d=3$.
We first describe in some detail the target problem class with 
discontinuous diffusion coefficients, which includes problems containing
sub-critical, critical, and supercritical nonlinearities.
We then establish basic quasi-optimal \emph{a priori} error estimate
for Galerkin approximations.
In the critical and subcritical cases, we follow our recent approach
to controling the nonlinearity using only pointwise control of the 
continuous solution and a local Lipschitz property, rather than 
through pointwise control of the discrete solution;
this eliminates the requirement that the discrete solution satisfy a 
discrete form of the maximum principle, hence eliminating the need for 
restrictive angle conditions in the underlying mesh.
The supercritical case continues to require such mesh conditions in order to 
control the nonlinearity.
We then design a two-grid algorithm consisting of a coarse grid solver 
for the original nonlinear problem, and a fine grid solver for a 
linearized problem. 
We analyze the quality of approximations generated by the algorithm,
and show that the coarse grid may be taken to have much larger elements
than the fine grid, and yet one can still obtain approximation quality that 
is asymptotically as good as solving the original nonlinear problem on the
fine mesh.
The algorithm we describe, and its analysis in this article, 
combines four sets of tools: 
the work of Xu and Zhou on two-grid algorithms for 
semilinear problems;
the recent results for linear interface problems due to 
Li, Melenk, Wohlmuth, and Zou;
and recent work on \emph{a priori} estimates for semilinear problems.
\end{abstract}

\maketitle


\tableofcontents

\clearpage

\section{Introduction}
   \label{sec:intro}

In this article, we consider a two-grid finite element method for
semilinear interface problems with discontinuous diffusion
coefficients. 
One of the primary motivations of this work is to develop more efficient
numerical methods for the nonlinear Poisson-Boltzmann equation, 
which has important applications in biochemistry and biophysics. 
However, the theory and techniques are applicable to a large class of 
semilinear interface problems, including problems with critical 
(and subcritical) nonlinearity arising in geometric analysis 
and mathematical general relativity. 


In order to achieve our goal of exploiting two-grid-type discretizations,
our first task is to more completely develop a basic quasi-optimal 
\emph{a priori} error analysis for Galerkin approximations of semilinear
interface problems.
The main challenge comes from the loss of global regularity for interface 
problems (cf. \cite{Babuska.I1970,Kellogg.R1975}). 
There has been much work on finite element approximation of the linear 
elliptic interface problem. 
For example, in~\cite{Babuska.I1970} an equivalent minimization problem was 
introduced to handle the jump interface condition; this problem was then
solved using finite element methods.
Subsequently, finite element approximation of the elliptic interface problems
was analyzed using penalty methods in~\cite{Barrett.J;Elliott.C1987}, and 
optimal rates in the $H^{1}$ and $L^{2}$ norms were obtained by appropriately 
choosing the penalty parameter.
Optimal \emph{a priori} error estimates for linear interface problems in the 
energy norm (i.e., a weighted $H^{1}$ norm) is given 
in~\cite{Plum.M;Wieners.C2003}.
In~\cite{Xu.J1982}, suboptimal error estimates of 
order $O(h|\log h|^{1/2})$ in the $H^{1}$ norm was obtained for 2D linear 
interface problems using standard finite element techniques.
Similarly, in~\cite{Chen.Z;Zou.J1998} it was shown that for $C^{2}$ interfaces
in 2D convex polygonal domains $\Gamma$, the linear FEM approximation
$u_{h}$ has suboptimal standard error estimates of orders $O(h|\log h|^{1/2})$ 
and $O(h^{2}|\log h|^{1/2})$ in $H^{1}$ and $L^{2}$ norms respectively. 
By using isoparametric elements to fit the smooth interface, optimal error 
estimates were obtained in~\cite{Sinha.R;Deka.B2006} for 2D interface problems.
These results have been generalized to higher-order finite elements 
approximation in~\cite{Li.J;Melenk.J;Wohlmuth.B;Zou.J2010}.
There are also other approaches for dealing with linear elliptic interface 
problem; for example, immersed interface finite element methods based on 
Cartesian grids (cf. \cite{Li.Z;Lin.T;Wu.X2003}), mortar finite element
(cf. \cite{Lamichhane.B;Wohlmuth.B2004}), and Lagrange multiplier methods
using non-matching meshes 
(cf. \cite{Hansbo.P;Lovadina.C;Perugia.I;Sangalli.G2005}).

Less work has been done for nonlinear interface problems. 
For smooth coefficients under quite strong (global) regularity assumptions,
quasi-optimal error estimates were obtained 
by~\cite{Xu.J1996b,Xu.J;Zhou.A2001a}. 
Due to the loss of global regularity for interface problems 
(cf.~\cite{Babuska.I1970,Kellogg.R1975}, see 
also~\cite{Nicaise.S;Sandig.A1994,Nicaise.S;Sandig.A1994a} 
for regularity of linear interface problems), these analysis techniques
are not applicable here.
Recently, Sinha and Deka~\cite{Sinha.R;Deka.B2009} studied linear finite 
element approximation of semilinear elliptic interface problems in two 
dimensional convex polygonal domains.
Under assumptions that the mesh resolves the interface, and that
the nonlinear function $b(\xi)$ satisfies
$$ |b'(\xi)| \le C|\xi|, \qquad \mbox{ and } |b''(\xi)| \leqs C, \qquad \forall \xi\in \R,$$ 
they showed optimal error estimates in the $H^{1}$ norm using the framework 
of~\cite{Brezzi.F;Rappaz.J;Raviart.P1980}, together with the results 
from~\cite{Chen.Z;Zou.J1998}. 

In this paper, we use a more natural approach for semilinear interface 
problems which can be applied to a somewhat different but larger class 
of nonlinear problems than~\cite{Sinha.R;Deka.B2009}.
For ease of exposition, we assume that the triangulation resolves the 
interface, although this assumption may be weakened.
The first step is to derive both continuous and discrete \emph{a priori}
$L^{\infty}$ bounds 
for the continuous and discrete solutions in order to control the nonlinearity.
While continuous $L^{\infty}$ bounds are fairly standard under quite general 
assumptions on the nonlinearity (cf. Assumption~\ref{ass:barrier}), 
discrete \emph{a priori} $L^{\infty}$ bounds require additional mesh conditions
on the triangulation (cf. Assumption~\ref{ass:angle}).
Based on \emph{a priori} $L^{\infty}$ control of the continuous and discrete 
solutions, we derive optimal \emph{a priori} error estimates in both the
$H^{1}$ and $L^{2}$ norms, with the help of a \emph{Local Monotonicity} 
assumption on the nonlinearity. 
A similar approach was used in \cite{Chen.L;Holst.M;Xu.J2007,Holst.M;McCammon.J;Yu.Z;Zhou.Y2012} for 
the Poisson-Boltzmann equation. 
We note the mesh conditions play a key role in obtaining discrete 
maximum/minimum principles (cf. \cite{Kerkhoven.T;Jerome.J1990,Jungel.A;Unterreiter.A2005,Karatson.J;Korotov.S2006,Wang.J;Zhang.R2011}). 
However, when the nonlinearity satisfies subcritical or critical growth 
conditions, and has some type of monotonicity, we have been able to derive 
quasi-optimal \emph{a priori} error estimates directly, 
without using discrete maximum principles, and hence without 
the need for any mesh angle condition assumptions~\cite{BHSZ11a}. 

Finite element approximation of semilinear interface problems results in 
the need to solve system of nonlinear algebraic equations, and the number
of unknowns in these systems can be extraordinary large in the case of three
or more spatial dimensions.
The most robust and efficient approach for solving these types of nonlinear 
algebraic systems has been repeatedly shown to be some variation of
damped inexact Newton iteration, which consists of an inner-outer iteration: 
an inner loop involving repeated linear solves, together with any outer 
loop involving a damped/inexact correction step. 
See for example 
\cite{Bank.R;Rose.D1981,Bank.R;Rose.D1982,Rannacher.R1991},
and also~\cite{Antal.I;Karatson.J2008} for an application to
nonlinear interface problems.
The basic approach involves the solution of a linear system on the fine mesh
at each Newton step.
However, the two-grid algorithm proposed 
in~\cite{Axelsson.O;Layton.W1996,Xu.J1996b} takes another approach, 
which consists of a coarse grid solver for the original nonlinear problem,
and a fine grid solver only involving for a linearized problem,
which is effectively a one-step Newton update of the solution. 
The benefit of using this two-grid idea is that it significantly reduces the 
overall computation cost, since we only need to solve the nonlinear problem 
on a coarse grid, and we can solve the linear problem on the fine grid by 
using standard multigrid/multilevel methods for optimal complexity. 
The central question concerning the two-grid method is to how choose the 
coarse grid problem; in other words, how coarse can one make the nonlinear 
problem discretization, but still achieve nearly optimal approximation 
properties if solving the full nonlinear problem on the fine grid.
Based on \emph{a priori} $H^{1}$ and $L^{2}$ error estimates for 
semilinear interface problems, in this paper we show that the basic framework 
developed in \cite{Xu.J1996b,Xu.J;Zhou.A2000} allows us to establish, 
both theoretically and numerically, that one may choose a coarse grid with 
much larger mesh size than the fine grid in the case of semilinear
interface problems.

The main contributions of this paper are as follows:
\begin{enumerate}
	\item We give a complete finite element error analysis for semilinear 
elliptic interface problems, under weak assumptions on the nonlinearity; 
this includes establishing quasi-optimal \emph{a priori} energy, 
$L^{2}$ and $L^{4}$ error estimates of the finite element approximation. 
	\item We also provide a practical approach to efficiently solve the 
resulting nonlinear algebraic problem by two-grid algorithms, reducing the 
solution of the original nonlinear system of equations on the fine grid 
to the solution of a nonlinear problem on a coarse grid having much fewer 
degrees of freedom, together with the solution of a linear problem on 
the fine mesh. 
We note that the resulting linear interface problem can be efficiently 
solved by PCG algorithms using multilevel or domain decomposition 
preconditioners (cf. \cite{Xu.J;Zhu.Y2008,Zhu.Y2008}). 
\end{enumerate}

The remainder of the article is organized as follows. 
In Section \ref{sec:pde}, we introduce the basic notation and the model 
problem. 
We also establish continuous $L^{\infty}$ bounds for the solution under very
weak assumptions on the data and the nonlinearity.
In Section~\ref{sec:fem}, we establish quasi-optimal error estimates for
the finite element approximation, by first deriving discrete \emph{a priori}
$L^{\infty}$ bounds. 
In Section~\ref{sec:two-grid}, we describe the two-grid algorithm, and give 
an analysis of the approximation properties of the algorithm. 
In Section~\ref{sec:num}, we give some numerical experiments 
to support our theoretical conclusions. 

\section{Semilinear Interface Problems}
   \label{sec:pde}

Let $\Omega \subset \R^{d}$ be a Lipschitz domain with $d \geqs 2$,
with an internal interface $\Gamma$ dividing it into two open disjoint
subdomains $\Omega_{1}$ and $\Omega_{2}$,
so that $\Omega=\Omega_{1}\cup \Gamma\cup \Omega_{2}$. 
For ease of exposition, we assume $\Omega_{1}$ and $\Omega_{2}$ are two 
non-overlapping polyhedral/polygonal subdomains.
We then focus on the following semilinear elliptic equation:
\begin{equation}
\label{eqn:model}
	-\nabla\cdot (D\nabla u)  +b(x, u) = 0 \mbox{  in  } \Omega, \qquad u|_{\partial \Omega} =0,
\end{equation}
with the jump conditions on $\Gamma$:
\begin{equation}
\label{eqn:interface}
	[u]= 0, \mbox{  and  } \left[D\frac{\partial u}{\partial {\bf n}}\right] =0 \quad \mbox{ on } \Gamma,
\end{equation}
where $[u]:= u_{1}|_{\Gamma} - u_{2}|_{\Gamma}$ and $\left[D\frac{\partial u}{\partial {\bf n}}\right] := D_{1}\frac{\partial{u_{1}}}{\partial {\bf n}_{1}} + D_{2}\frac{\partial{u_{2}}}{\partial {\bf n}_{2}}$ with ${\bf n}_{i}$ representing the unit outer normal on $\Omega_{i}$. Here $u_{i} \;\; (i=1,2)$ stands for the restriction of $u$ on $\Omega_{i}$.   We assume that the coefficient $D=D(x): \Omega \to \R^{d\times d}$ is symmetric and piecewise constant on each subdomain, i.e., $D=D_{i}$ in $\Omega_{i}$, and $D \in L^{\infty}(\Omega)$ satisfies
\begin{equation}
\label{eqn:diffusion-assump}
	m |\xi|^{2} \leqs \xi^{T} D(x) \xi \leqs M |\xi|^{2}, \quad \forall \xi \in \R^{d}, \quad x \in \Omega,
\end{equation}
for constants $m, M>0.$

In working with the solution and approximation theory for~\eqref{eqn:model}-\eqref{eqn:interface},
we will employ standard notation for the function spaces, norms,
and other objects that will be relevant.
For example, given any subset $G\subset \R^{d},$ we denote as $L^{p}(G)$ 
the Lebesgue spaces for ${1\leqs p\leqs \infty}$, with 
norm $\|\cdot \|_{0, p, G}$. 
We denote the Sobolev norms as ${\|v\|_{s,p,G} =\|v\|_{W^{s,p}(G)}}$ 
for the spaces $W^{s,p}(G)$, with ${W^{s,2}(G)=H^s(G)}$ when $p=2$.
For any two functions ${v\in L^{p}(G)}$ and ${w\in L^{q}(G)}$ 
with ${p,q\geqs 1}$ 
and ${1/p + 1/q =1}$, we denote the pairing ${(v, w)_{G} := \int_{G}vw dx}$. 
For simplicity, when $G = \Omega$, we omit it in the norms/pairings. 
We will also denote as $H^{s}(\Omega_{1}\cup \Omega_{2})$ 
the space of functions $u$ such that 
$u|_{\Omega_{i}}\in H^{s}(\Omega_{i})$ for $i=1,2$ and $s>1$, 
endowed with the  norm 
$$
	\|u\|_{H^{s}(\Omega_{1}\cup \Omega_{2})} ^{2}:= \|u\|_{H^{s}(\Omega_{1})}^{2} + \|u\|_{H^{s}(\Omega_{2})}^{2}.
$$
We will use the notation $x_1\lesssim y_1$, and $x_2\gtrsim y_2$,
whenever there exist constants $C_1, C_2$ independent of the mesh size
$h$ and the coefficient $D$ or other parameters that $x_1$,
$x_2$, $y_1$ and $y_2$ may depend on, and such that $x_1 \leqs C_1 y_1$
and $x_2\geqs C_2 y_2$. 
We also denote $x\simeq y$ as $C_{1} x \leqs y\leqs C_{2} x$. 
Without confusion, we will also write $b(\xi) := b(x, \xi)$ and $b'(\xi):= \partial b(x, \xi)/\partial \xi$ for simplicity. 

\begin{remark}
\label{rk:general}
Note that more general interface conditions
$\left[D\frac{\partial u^l}{\partial n_{\Gamma}}\right]_{\Gamma}= g_{\Gamma}$ 
for some given function $g_{\Gamma} \in L^{\infty}(\Gamma)$ and 
non-homogeneous Dirichlet data $u|_{\partial \Omega} = g$ can be easily
treated using our results here, due to the observation that one may split 
the equation into two sub-problems.
The first sub-problem is a linear elliptic interface problem, 
and the second sub-problem is a nonlinear elliptic problem 
\eqref{eqn:model} with homogeneous Dirichlet boundary condition. 
More precisely, let $u=u^l + u^n,$ where $u^l\in H_g^1(\Omega)$ satisfies 
the linear elliptic interface problem:
\begin{equation}
\label{eqn:linear}
	\left\{\begin{array}{lll}
		-\nabla \cdot(D \nabla u^l) &=& 0 \mbox{ in } \Omega\\
		u^l|_{\partial \Omega} = g , \mbox{ and } {\left[D\frac{\partial u^l}{\partial n_{\Gamma}}\right]}_{\Gamma}&=& g_{\Gamma}; 
	\end{array}\right.
\end{equation}
while the nonlinear part $u^n$ is the solution to the (homogeneous)
semilinear equation
\begin{equation*}
		-\nabla \cdot (D \nabla u^n) + b(u^n + u^l) = 0 \mbox{ in } \Omega, 
\end{equation*}
with the interface condition \eqref{eqn:interface}.
On the other hand, the treatment for the linear interface problem \eqref{eqn:linear} is 
standard; cf.~\cite{Chen.Z;Zou.J1998,Li.J;Melenk.J;Wohlmuth.B;Zou.J2010}.
Therefore, without loss of generality we focus on \eqref{eqn:model} with 
homogeneous interface conditions~\eqref{eqn:interface}.
\end{remark}

The weak form of~\eqref{eqn:model} reads: Find $u\in H_{0}^{1}(\Omega)$ such that 
\begin{equation}
\label{eqn:weak}
	a(u, v) + (b(u), v) =0, \qquad \forall v\in H_{0}^{1}(\Omega),
\end{equation}
where $a(u, v) := \int_{\Omega} D\nabla u \cdot\nabla v dx$.
By the assumption \eqref{eqn:diffusion-assump} on the coefficient $D$, 
the bilinear form $a(u, v)$ 
in~\eqref{eqn:weak} is coercive and continuous, namely, 
\begin{equation}
m \|\nabla u\|_{0,2}^2 \leqs a(u,u),
\qquad
a(u,v) \leqs M \|\nabla u\|_{0,2} \|\nabla v\|_{0,2}, 
\qquad \forall u,v \in H_0^{1}(\Omega),
\label{eqn:coercive-bounded}
\end{equation}
where $0 < m \leqs M < \infty$ are constants depending only on the
maximal and minimal eigenvalues on $D$ and the domain $\Omega$. 
The properties~\eqref{eqn:coercive-bounded}
imply the semi-norm on $H_0^{1}(\Omega)$ is equivalent
to the energy norm 
$\tbar\cdot\tbar \colon H_0^{1}(\Omega) \rightarrow \mathbb{R}$,
\begin{equation}
	\tbar u \tbar^2 = a(u,u),
	\qquad
	m \|\nabla u\|_{0,2}^2 \leqs \tbar u \tbar^2 \leqs M \|\nabla u\|_{0,2}^2.
   \label{eqn:equiv}
\end{equation}

\emph{A priori} $L^{\infty}$ bounds for any solution to the continuous problem
play a crucial role in controlling the nonlinearity. 
The following weak assumption on the nonlinearity allows for a large class 
of nonlinear problems containing both monotone and non-monotone nonlinearity:
\begin{assumption}
\label{ass:barrier}
$b:\Omega\times\R\to \R$ is a Carath\'eodory function, which satisfies the barrier-sign conditions in its second argument: there exist constants 
$\alpha, \beta \in \mathbb{R}$, with ${\alpha \leqs \beta}$, such that
\begin{align*}
b(x,\xi)&\geqs 0, \quad \forall \xi \geqs \beta, 
\quad \mbox{ a.e. in } \Omega
\\
b(x,\xi)&\leqs 0, \quad \forall \xi \leqs \alpha, 
\quad \mbox{ a.e. in } \Omega.
\end{align*}
\end{assumption}
We have the following theorem based on the Assumptions~\ref{ass:barrier}:
\begin{theorem}[\emph{A Priori} $L^{\infty}$ Bounds]
\label{thm:cont-infty}
Let the Assumption~\ref{ass:barrier} hold.
Let $u\in H_g^1(\Omega)$ be any
weak solution to~\eqref{eqn:weak}.
Then  
\begin{equation}
\label{eqn:gcont-infty}
\underline{u} \leqs u \leqs \overline{u}, \quad \mbox{ a.e. in } \Omega,
\end{equation}
for the constants $\overline{u}$ and $\underline{u}$ defined by
\begin{equation}
\label{eqn:barriers}
	\overline{u} := \max\left\{\beta, \sup_{x\in \partial \Omega} g(x)\right\},\qquad \underline{u}: = \min\left\{\alpha, \inf_{x\in \partial \Omega} g(x)\right\},
\end{equation} 
where $\alpha \leqs \beta$ are the constants in Assumption~\ref{ass:barrier}. 
\end{theorem}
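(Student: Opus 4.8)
The plan is to use the classical Stampacchia truncation method, testing the weak form~\eqref{eqn:weak} against the parts of $u$ that exceed the barrier levels. I would prove the two one-sided bounds separately; by symmetry it suffices to describe the upper bound $u \leqs \overline{u}$ in detail, with the lower bound following from an identical argument.

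First I would introduce the test function $v := (u - \overline{u})^+ = \max\{u - \overline{u},\, 0\}$ and verify that it is admissible, i.e. $v \in H_0^1(\Omega)$. Membership in $H^1(\Omega)$ together with the chain rule $\nabla v = \chi_{\{u > \overline{u}\}}\nabla u$ follows from the standard truncation lemma for Sobolev functions. The vanishing trace is precisely where the definition of $\overline{u}$ enters: since $u \in H_g^1(\Omega)$ has trace $g$ on $\partial\Omega$ and $\overline{u} \geqs \sup_{\partial\Omega} g$, we have $u - \overline{u} \leqs 0$ on $\partial\Omega$ in the trace sense, hence $v$ has zero trace. A point worth emphasizing is that the interface conditions~\eqref{eqn:interface} require no separate treatment: the condition $[u]=0$ is already encoded in $u \in H^1(\Omega)$, while the flux jump condition is the natural condition built into~\eqref{eqn:weak}, so the truncation argument proceeds globally on $\Omega$.

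Next I would substitute $v$ into~\eqref{eqn:weak} and examine the sign of each term on the set $S := \{x \in \Omega : u(x) > \overline{u}\}$. For the bilinear form, the chain rule and the lower bound in~\eqref{eqn:diffusion-assump} give $a(u,v) = \int_S D\nabla u \cdot \nabla u\, dx \geqs m\|\nabla v\|_{0,2}^2 \geqs 0$. For the nonlinear term, on $S$ we have $u > \overline{u} \geqs \beta$, so Assumption~\ref{ass:barrier} yields $b(x,u) \geqs 0$ a.e.; since also $v = u - \overline{u} > 0$ on $S$, the integrand is nonnegative and $(b(u),v) \geqs 0$. As the sum of the two terms vanishes, I obtain $m\|\nabla v\|_{0,2}^2 \leqs a(u,v) = -(b(u),v) \leqs 0$, forcing $\nabla v = 0$ a.e.; since $v \in H_0^1(\Omega)$, the Poincar\'e inequality gives $v \equiv 0$, i.e. $u \leqs \overline{u}$ a.e. The lower bound is obtained identically with $w := (u - \underline{u})^- = \min\{u - \underline{u},\, 0\}$, on whose support $u < \underline{u} \leqs \alpha$ forces $b(x,u) \leqs 0$ while $w < 0$, again making the pairing nonnegative.

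The computation is short, so the real care lies not in any single deep step but in two technical justifications: that the truncation $(u-\overline{u})^+$ genuinely lies in $H_0^1(\Omega)$ with the stated gradient, and --- more delicately --- that the pairing $(b(u),v)$ is well-defined and may be read as a genuine, sign-definite Lebesgue integral rather than merely an abstract $H^{-1}$--$H_0^1$ duality. This is guaranteed by the integrability implicit in the notion of weak solution for~\eqref{eqn:weak}; with that in hand, the sign of the Carath\'eodory nonlinearity supplied by Assumption~\ref{ass:barrier} does all the remaining work.
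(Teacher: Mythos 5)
Your proposal is correct and follows essentially the same route as the paper's own proof: both use the Stampacchia-type truncations $(u-\overline{u})^+$ and $(u-\underline{u})^-$ as test functions in~\eqref{eqn:weak}, exploit the sign of $b$ from Assumption~\ref{ass:barrier} on the support of the truncation together with coercivity~\eqref{eqn:diffusion-assump}, and conclude $\nabla v = 0$ hence $v \equiv 0$ via Poincar\'e. Your additional remarks on the admissibility of the test function (zero trace from $\overline{u} \geqs \sup_{\partial\Omega} g$) and on the interface conditions being built into the weak form simply make explicit what the paper leaves implicit.
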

\begin{proof}
To prove the upper bound, let us introduce
$$
\phi = (u-\overline{u})^+=\max\{ u-\overline{u}, 0\}.
$$
By the definition of $\overline{u}$, 
it follows (cf.~\cite[Theorem 10.3.8]{StHo2011a})
that $\phi \in H_{0}^{1}(\Omega)$ and $\phi \geqs 0$ a.e. in $\Omega$. 
Taking $v= \phi$ in \eqref{eqn:weak}, we have 
\begin{eqnarray*}
	a(u, \phi) = a(u-\overline{u}, \phi) = a(\phi, \phi) \geqs m\|\nabla \phi\|_{0,2}^{2}.
\end{eqnarray*}
This implies that 
$$
	m\|\nabla \phi\|_{0,2}^{2} \leqs a(u, \phi) = (-b(u), \phi) \leqs 0,
$$
since $-b(u) \leqs 0 $ a.e. in the support of $\phi$. Hence, $\|\nabla \phi\|_{0,2}\equiv 0$ which yields
$\phi =0$. Therefore, the upper bound of \eqref{eqn:gcont-infty} holds.

Similarly, we introduce 
$$
	\psi = (u-\underline{u})^-=\min\{ u-\underline{u}, 0\}.
$$
It is obvious that $\psi \in H_{0}^{1}(\Omega)$ can be used as a test function in \eqref{eqn:weak}.
Moreover, $\psi \leqs 0$ {a.e.} in $\Omega$, and Assumption~\ref{ass:barrier} implies
$- b(u) \geqs 0$ on the support of $\psi$. Therefore,
$$
	m\|\nabla \psi\|_{0,2}^{2} \leqs a(u, \psi) =  (-b(u), \psi) \leqs 0,
$$
which implies $\psi \equiv 0$ as before. This proves the lower bound of \eqref{eqn:gcont-infty}.
\end{proof}

To conclude this section, we give the nonlinear Poisson-Boltzmann equation as an example, which is one of our main motivation for this work. This equation has been widely used in biochemistry, biophysics and in semiconductor modeling for describing
the electrostatic interactions of charged bodies in dielectric media. 
\begin{example}
\label{ex:pbe}
The regularized Poisson-Boltzmann equation reads:
\begin{equation}
	\label{eqn:rpbe}
	\left\{\begin{array}{rlll}
	-\nabla\cdot (\varepsilon \nabla u) + \kappa^2 \sinh (u) &=& 0, &\mbox{in } \Omega\\
	
	[u]_{\Gamma} =0 \mbox{ and }{\left[\varepsilon \frac{\partial u}{\partial {\bf n}}\right]}_{\Gamma} &=& g_{\Gamma}, &\mbox{on } \Gamma\\

	u|_{\partial \Omega} & = & g, &\mbox{on } \partial\Omega,\\
	\end{array}\right.
\end{equation}
where $g_{\Gamma} \in L^{\infty}(\Gamma)$ is a function defined on $\Gamma$ arising from regularization of pointwise charges in the molecular region (see~\cite{Chen.L;Holst.M;Xu.J2007,Holst.M;McCammon.J;Yu.Z;Zhou.Y2012} for detailed derivations). 
Here the diffusion coefficient $\varepsilon$ is piecewise positive constant $\varepsilon|_{\Omega_{1}} = \varepsilon_{1}$ and $\varepsilon|_{\Omega_{2}} = \varepsilon_{2}$, where $\Omega_{1}$ is the molecular region, and $\Omega_{2}$ is the solution region(see Figure~\ref{fig:interface} for example). 
\begin{figure}[h]
\label{fig:interface}
	\includegraphics{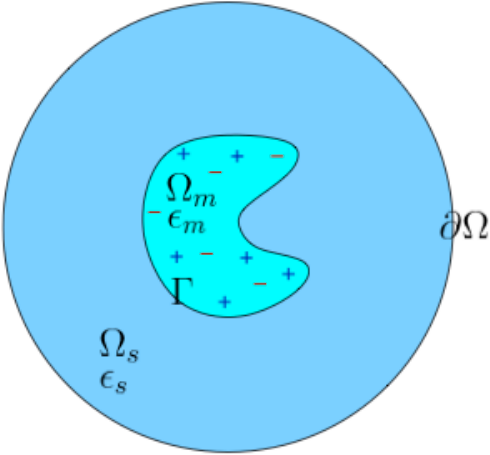}
\end{figure}
The modified Debye-H\"uckel parameter $\kappa^{2}$ is also piecewise constant $\kappa^{2}(x)|_{\Omega_{1}} = 0$ and $\kappa^{2}(x)|_{\Omega_{2}} >0$. The Dirichlet condition $u|_{\partial \Omega} = g$ are imposed on the boundary $\partial \Omega$.
We note that
equation \eqref{eqn:rpbe} can be reduced to \eqref{eqn:model} by splitting it into linear and nonlinear components as described in Remark~\ref{rk:general}, see \cite{Holst.M;McCammon.J;Yu.Z;Zhou.Y2012} for more details. 
Obviously, the Assumption~(A1) is satisfied for \eqref{eqn:rpbe}.
\end{example}
 
\section{Finite Element Error Estimates}
   \label{sec:fem}

We now discuss some error estimates on the finite element discretization 
of~\eqref{eqn:weak} which will play a key role in the two-grid analysis.
Given a quasi-uniform triangulation $\cT_{h}$ of $\Omega,$ we denote by
$V_{h}(g) \subset H^{1}_{g}(\Omega)$ the standard piecewise linear finite 
element space satisfying the Dirichlet boundary condition. 
For simplicity, we denote $V_{h}:=V_{h}(0)$. 
For ease of exposition, we assume the triangulation $\cT_{h}$ resolves the 
interface $\Gamma$. 
Then finite element approximation of the target problem~\eqref{eqn:model} 
reads: Find $u_{h} \in V_{h}$ such that
\begin{equation}
\label{eqn:fem}        
	a(u_{h}, v) + (b(u_{h}), v) = 0,\qquad \forall v\in V_{h}.
\end{equation}
The following theorem shows that under appropriate mesh condition on $\cT_{h}$,
the discrete solution $u_{h}$ of \eqref{eqn:fem} satisfies 
\emph{a priori} $L^{\infty}$ bounds (as does the continuous solution 
$u$ due to Theorem~\ref{thm:cont-infty}).
More precisely, assume the triangulation $\cT_{h}$ satisfies
\begin{assumption}
\label{ass:angle}
Let $\phi_{i}$ and $\phi_{j}$ are the basis functions corresponding to the vertices $x_{i}$ and $x_{j}$, respectively. We assume that
\begin{equation}
	a(\phi_{i}, \phi_{j}) \leqs 0, \qquad \forall i\neq j,
\end{equation}
\end{assumption}
Under this assumption, we can obtain the following \emph{a priori} $L^{\infty}$ bound of the discrete solution $u_{h}$.
\begin{theorem}
\label{thm:disc-infty}
	Let $b$ satisfy the Assumption ~\ref{ass:barrier}, and $\cT_{h}$ satisfy the Assumption ~\ref{ass:angle}.  Then the discrete solution $u_{h}\in V_{h}(g)$ to \eqref{eqn:fem} satisfies 
	\begin{equation}
		\label{eqn:disc-infty}
		\underline{u} \leqs u_{h} \leqs \overline{u}, 	\quad a.e.\;\; \mbox{ in } \Omega,
	\end{equation}
	where $\underline{u}$ and $\overline{u}$ are defined in \eqref{eqn:barriers}.
\end{theorem}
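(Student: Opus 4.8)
The plan is to discretize the proof of Theorem~\ref{thm:cont-infty}, replacing the continuous barrier test functions $(u-\overline{u})^+$ and $(u-\underline{u})^-$ by their nodal analogues in $V_h$, and to invoke Assumption~\ref{ass:angle} at exactly the place where the continuous argument used only coercivity of $a(\cdot,\cdot)$. I treat the upper bound; the lower bound is symmetric. Write $u_h = \sum_i U_i\phi_i$ over all vertices $x_i$ (interior and boundary) and set $\phi = \sum_i (U_i-\overline{u})^+\phi_i$. Since $\overline{u}\geqs \sup_{\partial\Omega}g$, every boundary value satisfies $U_i = g(x_i)\leqs\overline{u}$, so those terms vanish and $\phi\in V_h$ with $\phi\geqs 0$. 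This $\phi$ is the natural stand-in for $(u_h-\overline{u})^+$, which is not piecewise linear and hence unavailable as a Galerkin test function.

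The first main step is a discrete maximum-principle estimate for the bilinear form, namely $a(u_h,\phi)\geqs a(\phi,\phi)$. To see this, decompose $u_h = w_h + \phi$ with $w_h = \sum_i \min\{U_i,\overline{u}\}\phi_i$, so $a(u_h,\phi) = a(w_h,\phi) + a(\phi,\phi)$, and it suffices to show $a(w_h,\phi)\geqs 0$. Setting $W_i = \min\{U_i,\overline{u}\}$ and $s_j = (U_j-\overline{u})^+$, and using the partition-of-unity identity $\sum_i a(\phi_i,\phi_j) = a(1,\phi_j) = 0$ to eliminate the diagonal term, one obtains
\[
  a(w_h,\phi) = \sum_{j:\,s_j>0} s_j \sum_{i\neq j}(W_i - W_j)\,a(\phi_i,\phi_j).
\]
For each active index $j$ we have $W_j = \overline{u}\geqs W_i$, so $W_i - W_j\leqs 0$, while Assumption~\ref{ass:angle} gives $a(\phi_i,\phi_j)\leqs 0$; hence every summand is nonnegative and $a(w_h,\phi)\geqs 0$. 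Together with coercivity~\eqref{eqn:coercive-bounded} this yields $a(u_h,\phi)\geqs a(\phi,\phi)\geqs m\|\nabla\phi\|_{0,2}^2$.

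Inserting $\phi$ into the discrete equation~\eqref{eqn:fem} gives $a(u_h,\phi) = -(b(u_h),\phi)$, whence $m\|\nabla\phi\|_{0,2}^2\leqs -(b(u_h),\phi)$. The second step — which I expect to be the main obstacle — is to show $(b(u_h),\phi)\geqs 0$; this forces $\|\nabla\phi\|_{0,2}=0$, and since $\phi$ vanishes on $\partial\Omega$ we get $\phi\equiv 0$, i.e. $U_i\leqs\overline{u}$ at every node and hence $u_h\leqs\overline{u}$. In the continuous proof this sign was automatic because $(u-\overline{u})^+$ is supported exactly on $\{u\geqs\overline{u}\}\subseteq\{u\geqs\beta\}$, where $b\geqs 0$ by Assumption~\ref{ass:barrier}. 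In the discrete case this is the delicate point: $\phi$ is supported on the union of the patches of the active nodes, a set that can protrude into $\{u_h<\overline{u}\}$ where the sign of $b(u_h)$ is not controlled, so one cannot argue pointwise on the support as before. The plan is to recover the correct sign from the barrier-sign condition evaluated at the active nodes together with the $M$-matrix monotonicity supplied by Assumption~\ref{ass:angle}: reducing to a maximal node $x_m$ with $U_m = \max_i U_i > \overline{u}$, the nonpositive off-diagonal entries force the maximum to propagate along mesh edges, and connectedness of $\cT_h$ combined with the boundary constraint $g(x_i)\leqs\overline{u}$ yields the contradiction. The lower bound then follows verbatim with $\psi = \sum_i (U_i-\underline{u})^-\phi_i$, using the other half of Assumption~\ref{ass:barrier} and the same structure.
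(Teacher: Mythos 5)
Your first step is correct and is, modulo bookkeeping, exactly the paper's own argument. The paper also tests \eqref{eqn:fem} with the nodal truncation $\phi_h^{+}=\sum_i (u_h(x_i)-\overline{u})^{+}\phi_i$, writes $u_h=\phi_h^{+}+[u_h-\overline{u}]^{-}+\overline{u}$, and estimates
\begin{equation*}
a(u_h,\phi_h^{+}) \;=\; a(\phi_h^{+},\phi_h^{+}) \;+\; \sum_{i\neq j}(u_h(x_i)-\overline{u})^{-}\,(u_h(x_j)-\overline{u})^{+}\,a(\phi_i,\phi_j)\;\geqs\; a(\phi_h^{+},\phi_h^{+}),
\end{equation*}
each off-diagonal term being nonnegative as the product of a nonpositive, a nonnegative, and (by Assumption~\ref{ass:angle}) a nonpositive factor. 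Your $\min$-decomposition $u_h=w_h+\phi$ combined with the partition-of-unity identity $\sum_i a(\phi_i,\phi_j)=0$ is the same computation in different notation, and your handling of the boundary nodes is correct (indeed slightly more careful than the paper's).

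The genuine gap is your second step, and the plan you sketch does not close it. You rightly observe that $(b(u_h),\phi)\geqs 0$ cannot be argued pointwise, since ${\rm supp}(\phi)$ is a union of patches that can protrude into $\{u_h<\beta\}$ where $b(u_h)$ may be negative and large. But the maximal-node/M-matrix plan hits the identical obstruction: the Galerkin equation at a maximal node $x_m$ reads $\sum_{j\neq m}(u_h(x_j)-u_h(x_m))\,a(\phi_j,\phi_m) = -(b(u_h),\phi_m)$, and to force the maximum to propagate to the neighbors you need precisely $(b(u_h),\phi_m)\geqs 0$ --- an integral of $b(u_h)$ over the patch $\omega_m$, whose sign is uncontrolled for exactly the reason you already identified. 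So the plan is circular: connectedness and the boundary constraint never get to act. You should know that the paper's own proof does not resolve this point either; it simply asserts $a(u_h,\phi_h^{+})=(-b(u_h),\phi_h^{+})\leqs 0$, importing the continuous-case justification of Theorem~\ref{thm:cont-infty} even though, as you note, the support argument is no longer valid for the interpolated truncation. A complete argument needs an actual mechanism at this step: for instance, discretizing the nonlinear term by vertex quadrature (mass lumping), so that it becomes $\sum_i \omega_i\, b(u_h(x_i))\,(u_h(x_i)-\overline{u})^{+}$ with weights $\omega_i>0$, which is nonnegative because only nodes with $u_h(x_i)>\overline{u}\geqs\beta$ contribute and $b\geqs 0$ there by Assumption~\ref{ass:barrier}; or an additional structural hypothesis on $b$ (e.g.\ monotonicity together with a splitting argument). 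As written, your proposal establishes the bilinear-form estimate but leaves the theorem's essential sign inequality unproven, so it is not a complete proof --- though it has correctly isolated the weakest point of the published one.
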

The idea of the proof of~\eqref{eqn:disc-infty} is the same as in 
Theorem~\ref{thm:cont-infty}. 
However, in the discrete setting, for a given $r\in \R$ the truncated 
functions $(u_{h} -r)^{\pm}$ are usually not in $V_{h}$. 
Thus, they can not be used as test functions in \eqref{eqn:fem}.
Instead, one can employ the nodal interpolation of these functions as test 
functions.
In particular, given any constant $r$, we denote
$$
	[u_{h} -r]^{\pm}:= \sum_{i=1}^{N} (u_{h}(x_{i}) - r)^{\pm} \phi_{i},
$$
where $N$ is the total number of degree of freedoms, and $\phi_{i}$  ($i=1, \cdots, N$) is the nodal basis function at the vertex $x_{i}$. 
While this does produce proper test functions, it unfortunately introduces 
mesh conditions such as Assumption~\ref{ass:angle} into the analysis.
\begin{proof}[Proof of Theorem~\ref{thm:disc-infty}]
	To prove the upper bound of \eqref{eqn:disc-infty}, define a test function $\phi_{h}^{+}(x) := [u_{h}(x)- \overline{u}]^{+} $. It is obvious that  $\phi_{h}^{+} \in V_{h}$ and ${\rm supp}(\phi_{h}^{+})$ is the union of of the macro elements for the vertices such that $u_{h}(x_{i}) >\overline{u}$. Therefore, it satisfies 
	\begin{equation}	
	\label{eqn:cutfem}
		a(u_{h}, \phi_{h}^{+}) + (b(u_{h}), \phi_{h}^{+}) =0.
	\end{equation}
	For the diffusion term in \eqref{eqn:cutfem}, we notice that
	\begin{eqnarray*}
		a(u_{h}, \phi_{h}^{+}) &= & a(\phi_{h}^{+}, \phi_{h}^{+}) + a([u_{h} -\overline{u}]^{-}, \phi_{h}^{+})\\
		&=&  a(\phi_{h}^{+}, \phi_{h}^{+}) + \sum_{i\neq j} (u_{h}(x_{i}) -\overline{u})^{-} (u_{h}(x_{j}) -\overline{u})^{+}a(\phi_{i}, \phi_{j})\\
		&\geqs& a(\phi_{h}^{+}, \phi_{h}^{+}) \geqs m\|\nabla \phi_{h}^{+} \|_{0,2}^{2},
	\end{eqnarray*}
	where we used the Assumption ~\ref{ass:angle} in the third step, and used \eqref{eqn:diffusion-assump} in the last step.
	Thus we obtain that
	$$
		m\|\nabla \phi_{h}^{+} \|_{0,2}^{2} \leqs a(u_{h}, \phi_{h}^{+}) = (-b(u_{h}), \phi_{h}^{+}) \leqs 0.
	$$
	This implies that $\phi_{h}^{+} \equiv 0$, and hence $u_{h}\leqs \overline{u}$ a.e. in $\Omega$.  The proof of the lower bound is similar, and so
we omit the detail here.
\end{proof}

\begin{remark}
\label{rk:noangle}
	Note that Assumption~\ref{ass:angle} requires certain angle condition on the triangulation $\cT_{h}$. This condition is crucial in proving the discrete maximal/minimal principle (cf. \cite{Ciarlet.P;Raviart.P1973,Kerkhoven.T;Jerome.J1990,Jungel.A;Unterreiter.A2005,Wang.J;Zhang.R2011}). However, in case that $b$ satisfies critical/subcritical growth condition, namely, there exists some constant $K>0$ such that 
	\begin{equation}
	\label{eqn:critical}
		|b^{(n)}(\xi)|\le K, \quad \forall \xi\in \R
	\end{equation} 
	where $n$ is an integer satisfying $n<\infty$ when $d=2$ and $n\leqs (d+2)/(d-2)$ when $d\geqs 3,$ 
	we are able to show the quasi-optimal error estimate directly, without using Assumption~\ref{ass:angle}; see \cite{BHSZ11a} for more detail.
\end{remark}

\emph{A priori} $L^{\infty}$ bounds (Theorem~\ref{thm:cont-infty} and Theorem~\ref{thm:disc-infty}) play crucial roles in controlling the nonlinearity, 
ensuring that the nonlinearity $b$ has a certain ``local Lipschitz'' property.
This property in turn is used to establish quasi-optimal error estimates for 
the finite element approximations.
For this purpose, let us make the following additional assumption on $b:$
\begin{assumption}
\label{ass:bm}
	$b$ is locally monotone, namely, 
	\begin{equation}
	\label{eqn:bm}
		b'(\xi) \geqs 0, \qquad \forall \xi \in [\underline{u}, \overline{u}],
	\end{equation}
	where $\underline{u}, \overline{u}$ are the barriers defined in \eqref{eqn:barriers}.
\end{assumption}
Without loss of generality, in the remainder of the paper, we let the Dirichlet data ${g=0}$. With the help of the \emph{a priori} $L^{\infty}$ bounds of $u$ and $u_{h}$ in Theorem~\ref{thm:cont-infty} and Theorem~\ref{thm:disc-infty} respectively, we are able to establish the following quasi-optimal error estimate.
\begin{theorem}
	\label{thm:quasi_optimal}
	Let $b$ satisfies the Assumptions~\ref{ass:barrier} and~\ref{ass:bm}, and $\cT_{h}$ satisfy the Assumption~\ref{ass:angle}.
Let $u\in H_{0}^{1}(\Omega)\cap H^{s}(\Omega_{1}\cup \Omega_{2})$ for some $s > 1$ and $u_{h}\in V_{h}$ be the solution to \eqref{eqn:weak} and the discrete solution to~\eqref{eqn:fem}, respectively.  
	Then the following quasi-optimal error estimate holds:
	\begin{equation}
	\label{eqn:error}
		\tbar u-u_h\tbar  \lesssim \inf_{v \in V_h} \tbar u-v\tbar \lesssim h^{s-1} \|u\|_{H^{s}(\Omega_{1}\cup\Omega_{2})}.
	\end{equation}
\end{theorem}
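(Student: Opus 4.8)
The plan is to split the estimate into two independent pieces: a \emph{quasi-optimality} (C\'ea-type) bound $\tbar u-u_h\tbar \lesssim \inf_{v\in V_h}\tbar u-v\tbar$, in which the nonlinearity is absorbed, and a standard best-approximation bound for the right-hand infimum. For the first piece I would start from Galerkin orthogonality: subtracting \eqref{eqn:fem} from \eqref{eqn:weak} tested against $v\in V_h\subset H_0^1(\Omega)$ gives $a(u-u_h,v)+(b(u)-b(u_h),v)=0$ for all $v\in V_h$. Fixing an arbitrary $w\in V_h$, I would expand $\tbar u-u_h\tbar^2 = a(u-u_h,u-w)+a(u-u_h,w-u_h)$ and apply orthogonality to the second term (legitimate since $w-u_h\in V_h$) to replace it by $-(b(u)-b(u_h),w-u_h)$.

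The crux is controlling the nonlinear contribution, and this is exactly where the \emph{a priori} $L^\infty$ bounds and local monotonicity enter. Writing $w-u_h=(w-u)+(u-u_h)$, the nonlinear term splits into a ``cross'' term $(b(u)-b(u_h),u-w)$ and a ``diagonal'' term $-(b(u)-b(u_h),u-u_h)$. By Theorems~\ref{thm:cont-infty} and~\ref{thm:disc-infty}, both $u$ and $u_h$ take values in $[\underline u,\overline u]$ pointwise a.e., so a pointwise mean value argument gives $b(u)-b(u_h)=b'(\zeta)(u-u_h)$ with $\zeta\in[\underline u,\overline u]$; Assumption~\ref{ass:bm} then forces $(b(u)-b(u_h))(u-u_h)\geqs 0$ a.e., so the diagonal term is nonpositive and may simply be discarded. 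This is precisely the mechanism that avoids any discrete regularity or maximum-principle argument for $u_h$ beyond the $L^\infty$ bound itself.

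It then remains to bound $\tbar u-u_h\tbar^2 \leqs a(u-u_h,u-w)+(b(u)-b(u_h),u-w)$. I would estimate the bilinear part by Cauchy--Schwarz in the energy inner product, $a(u-u_h,u-w)\leqs \tbar u-u_h\tbar\,\tbar u-w\tbar$, and the cross term using the local Lipschitz property of $b$ on $[\underline u,\overline u]$ together with the Poincar\'e inequality and the norm equivalence \eqref{eqn:equiv}, giving $(b(u)-b(u_h),u-w)\leqs L\|u-u_h\|_{0,2}\|u-w\|_{0,2}\lesssim \tbar u-u_h\tbar\,\tbar u-w\tbar$ with $L=\sup_{[\underline u,\overline u]}|b'|$. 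Dividing by $\tbar u-u_h\tbar$ and taking the infimum over $w\in V_h$ yields the first inequality with constant $1+LC_P^2/m$. For the second inequality I would invoke the interface-adapted interpolation estimate of Li, Melenk, Wohlmuth, and Zou: since $\cT_h$ resolves $\Gamma$ and $u\in H^s(\Omega_1\cup\Omega_2)$, there is $v\in V_h$ with $\tbar u-v\tbar\lesssim h^{s-1}\|u\|_{H^s(\Omega_1\cup\Omega_2)}$.

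I expect the main obstacle to lie in the second inequality rather than in the quasi-optimality step: because $u$ lacks global $H^s$ regularity across $\Gamma$, the approximation rate cannot be extracted from a naive nodal-interpolation error bound, and one must rely on the piecewise-regularity-aware, interface-resolving interpolation theory. The quasi-optimality step, by contrast, is robust; its only delicate point is checking that the constant $1+LC_P^2/m$ remains independent of $h$, which holds because $L$ is a fixed Lipschitz constant on the compact range $[\underline u,\overline u]$ and $C_P$, $m$ depend only on $\Omega$ and $D$.
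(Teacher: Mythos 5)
Your proposal is correct and follows essentially the same route as the paper's own proof: Galerkin orthogonality, the splitting of the nonlinear term into a cross term (bounded via the local Lipschitz constant on $[\underline u,\overline u]$ furnished by Theorems~\ref{thm:cont-infty} and~\ref{thm:disc-infty}, plus Poincar\'e and the norm equivalence \eqref{eqn:equiv}) and a diagonal term discarded by the local monotonicity of Assumption~\ref{ass:bm}, followed by the interface-resolving interpolation estimate of Li--Melenk--Wohlmuth--Zou for the second inequality. The only cosmetic difference is notational (your $w$ versus the paper's $v$), and your explicit constant $1+LC_P^2/m$ agrees with what the paper's chain of inequalities implicitly yields.
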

\begin{proof}
	By Assumptions~\ref{ass:barrier} and~\ref{ass:angle}, 
Theorem~\ref{thm:cont-infty} and ~\ref{thm:disc-infty} 
give \emph{a priori} $L^{\infty}$ bounds on $u$ and $u_{h}$: 
	$$\underline{u} \leqs u, u_{h} \leqs \overline{u}$$ 
	 for the constants $\underline{u}\leqs \overline{u}$ defined in \eqref{eqn:barriers}. This implies that
	\begin{equation}
		\label{eqn:lipschitz}
		b(u) - b(u_{h}) = b'(\xi) (u-u_{h}) \leqs C_L |u -u_{h}|,
	\end{equation}
	where $C_L = \sup_{\xi \in [\underline{u}, \overline{u}]} \|b'(\xi)\|_{0,\infty}$ is a constant depending only on $b$, $\underline{u}$ and $\overline{u}$.
	Subtracting equation \eqref{eqn:fem} from \eqref{eqn:weak}, we have 
\begin{equation*}
        a(u-u_{h}, v) + (b(u) - b(u_{h}), v) =0, \qquad \forall v\in V_{h}.
\end{equation*}
By using this identity, we obtain
\begin{align*}
\tbar u - u_{h}\tbar^{2}
  &= a(u-u_{h}, u-u_{h}) \\
  &= a(u-u_{h}, u-v) + a(u-u_{h}, v-u_{h}), \quad \forall v\in V_{h}\\
  &= a(u-u_{h}, u-v) + (b(u) - b(u_{h}), u_{h} -v)\\
  &= a(u-u_{h}, u-v) + (b(u) - b(u_{h}), u -v) - (b'(\xi)(u -u_{h}), u - u_{h})\\
  &\leqs \tbar u-u_{h}\tbar  \tbar u - v\tbar 
  +C_L\|u -u_{h}\|_{0,2} \|u-v\|_{0,2},
\end{align*}
where in the last inequality, we used Cauchy-Schwarz inequality, the Lipschitz property \eqref{eqn:lipschitz} of $b$ and the Local Monotonicity \eqref{eqn:bm} from Assumption~\ref{ass:bm}. 
Then by 
Poincar\'e inequality we have
\begin{align*}
\tbar u - u_{h}\tbar^{2}
  &\leqs \tbar u- u_{h}\tbar \tbar u - v\tbar 
   + C_LC_{P}^{2} \|\nabla(u - u_{h})\|_{0,2} \|\nabla(u - v)\|_{0,2}
\end{align*}
where $C_{P}$ is the Poincar\'e constant. 
Thus we obtain
$$
	\tbar u - u_{h}\tbar \lesssim \tbar u - v\tbar.
$$
Therefore, we have proved the first inequality in~\eqref{eqn:error},
since $v\in V_{h}$ is arbitrary.
The second inequality in \eqref{eqn:error} follows by standard interpolation 
error estimates; cf.~\cite[Theorem 3.5]{Li.J;Melenk.J;Wohlmuth.B;Zou.J2010}.
\end{proof}

To conclude this section, let us try to derive $L^{2}$ error estimates:
$\|u-u_{h}\|_{0,2}$, by using duality arguments. 
To begin, introduce the following linear adjoint problem: 
Find $w\in H_{0}^{1}(\Omega)$ such that
\begin{equation}
	\label{eqn:dual}
	 a(v, w) + (b'(u) v, w) = (u - u_{h}, v), 	\qquad v \in H_{0}^{1}(\Omega).
\end{equation}
We assume that the linear interface problem~\eqref{eqn:dual} has the regularity 
\begin{equation}
\label{eqn:reg}
	\|w\|_{H^{\tau}(\Omega_{1} \cup \Omega_{2})} \leqs C \|u - u_{h}\|_{0,2}
\end{equation}
for some $\tau >1.$ 
The regularity assumption \eqref{eqn:reg} is also called ``$\tau$-regularity'' in \cite[Assumption 4.3]{Li.J;Melenk.J;Wohlmuth.B;Zou.J2010}, which is quite natural for linear interface problems.
Along with \eqref{eqn:dual}, let us also introduce the finite element 
approximation $w_{h}\in V_{h}$ which satisfies:
\begin{equation}
	\label{eqn:dualfem}
	 a(v_{h}, w_{h}) + (b'(u) v_{h}, w_{h}) = (u - u_{h}, v_{h}), 	\qquad v_{h} \in V_{h}.
\end{equation}
Then by standard finite element approximation theory for the linear interface 
problem~\eqref{eqn:dual} 
(cf. \cite{Chen.Z;Zou.J1998,Li.J;Melenk.J;Wohlmuth.B;Zou.J2010}),
we have the following error estimate:
\begin{equation}
\label{eqn:femlinear}
	\tbar w -w_{h}\tbar \lesssim h^{\tau-1} \|w\|_{H^{\tau}(\Omega_{1}\cup \Omega_{2})} \lesssim h^{\tau-1} \|u-u_{h}\|_{0,2},
\end{equation}
where in the second inequality we have used the regularity 
assumption~\eqref{eqn:reg}. 
We then have the following $L^{2}$ error estimate for $u_{h}$:
\begin{theorem}[$L^{2}$ Error Estimate]
	\label{thm:l2err}
	Let $b$ satisfy Assumptions~\ref{ass:barrier} and \ref{ass:bm}, and 
let $\cT_{h}$ satisfy Assumption~\ref{ass:angle}. 
Let $u \in H_{0}^{1}(\Omega)\cap H^{s}(\Omega_{1} \cup \Omega_{2})$ with 
$s >1$ be the solution to~\eqref{eqn:weak}, and let $u_{h}$ be the solution 
to~\eqref{eqn:fem}. 
Suppose that the dual problem \eqref{eqn:dual} satisfies the 
$\tau$-regularity \eqref{eqn:reg} for some $\tau >1$. 
Then
	\begin{equation}
	\label{eqn:l2}
		\|u - u_{h}\|_{0,2} \leqs C (h^{\tau -1}  + h^{s-1}) \|\nabla (u - u_{h})\|_{0,2},
	\end{equation}
	where $C$ is independent of $h.$
\end{theorem}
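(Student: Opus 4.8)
The plan is to run an Aubin--Nitsche duality argument built on the linearized adjoint problem~\eqref{eqn:dual}, its discrete analogue~\eqref{eqn:dualfem}, and the linear estimate~\eqref{eqn:femlinear}. First I would test~\eqref{eqn:dual} with $v = u - u_h \in H_0^1(\Omega)$, which gives
\[
\|u - u_h\|_{0,2}^2 = a(u - u_h, w) + (b'(u)(u - u_h), w).
\]
Subtracting~\eqref{eqn:fem} from~\eqref{eqn:weak} yields the Galerkin relation $a(u - u_h, v_h) + (b(u) - b(u_h), v_h) = 0$ for all $v_h \in V_h$; using it with $v_h = w_h$ to rewrite $a(u - u_h, w_h)$ after inserting $w = (w - w_h) + w_h$ above, a short rearrangement splits the error into three pieces,
\[
\|u - u_h\|_{0,2}^2 = a(u - u_h, w - w_h) + (b'(u)(u - u_h), w - w_h) + \big( b'(u)(u - u_h) - (b(u) - b(u_h)),\, w_h \big),
\]
the last being the genuinely nonlinear remainder.

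The first two terms are routine. For the first, continuity of $a(\cdot,\cdot)$ gives $a(u - u_h, w - w_h) \leqs \tbar u - u_h\tbar\,\tbar w - w_h\tbar$, and~\eqref{eqn:femlinear} bounds $\tbar w - w_h\tbar \lesssim h^{\tau-1}\|u - u_h\|_{0,2}$, so this contribution is $\lesssim h^{\tau-1}\|\nabla(u - u_h)\|_{0,2}\|u - u_h\|_{0,2}$. For the second, the \emph{a priori} $L^\infty$ bounds of Theorems~\ref{thm:cont-infty} and~\ref{thm:disc-infty} together with Assumption~\ref{ass:bm} give $0 \leqs b'(u) \leqs C_L$ on $[\underline{u}, \overline{u}]$, so $|(b'(u)(u - u_h), w - w_h)| \leqs C_L\|u - u_h\|_{0,2}\|w - w_h\|_{0,2}$; combining the Poincar\'e inequality with~\eqref{eqn:femlinear} again yields order $h^{\tau-1}\|\nabla(u - u_h)\|_{0,2}\|u - u_h\|_{0,2}$. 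I would also record the discrete dual stability bound $\|w_h\|_{0,2} \lesssim \|u - u_h\|_{0,2}$, obtained by testing~\eqref{eqn:dualfem} with $v_h = w_h$ and discarding the nonnegative term $(b'(u)w_h, w_h)$.

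The remainder term is where the real work lies, and is the step I expect to be the main obstacle. Writing $b(u) - b(u_h) = b'(\xi)(u - u_h)$ with $\xi$ between $u$ and $u_h$, the remainder becomes $((b'(u) - b'(\xi))(u - u_h), w_h)$; assuming $b'$ is Lipschitz on $[\underline{u}, \overline{u}]$ with constant $C_{L'}$ (which holds for the motivating example $b(\xi) = \kappa^2\sinh\xi$) and using $|u - \xi| \leqs |u - u_h|$, it is controlled by $C_{L'}\int_\Omega (u - u_h)^2|w_h|\,dx$. The subtlety is that a crude $L^\infty$ bound on $u - u_h$ here would throw away all smallness and leave an uncontrolled $O(1)$ coefficient; instead I would apply H\"older with exponents $(4,4,2)$ to get $C_{L'}\|u - u_h\|_{0,4}^2\|w_h\|_{0,2}$, and then split the two factors of $\|u - u_h\|_{0,4}$ asymmetrically. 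The Sobolev embedding $H^1(\Omega)\hookrightarrow L^4(\Omega)$ (valid for $d = 2,3$) handles one factor, while the quasi-optimal energy estimate~\eqref{eqn:error} handles the other, giving $\|u - u_h\|_{0,4}^2 \lesssim \|\nabla(u - u_h)\|_{0,2}\,\tbar u - u_h\tbar \lesssim h^{s-1}\|u\|_{H^{s}(\Omega_1\cup\Omega_2)}\|\nabla(u - u_h)\|_{0,2}$. Together with the dual stability bound this makes the remainder $\lesssim h^{s-1}\|\nabla(u - u_h)\|_{0,2}\|u - u_h\|_{0,2}$, the $h$-independent factor $\|u\|_{H^{s}(\Omega_1\cup\Omega_2)}$ being absorbed into $C$. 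Summing the three contributions gives $\|u - u_h\|_{0,2}^2 \lesssim (h^{\tau-1} + h^{s-1})\|\nabla(u - u_h)\|_{0,2}\|u - u_h\|_{0,2}$, and dividing by $\|u - u_h\|_{0,2}$ (the case $u = u_h$ being trivial) yields~\eqref{eqn:l2}.
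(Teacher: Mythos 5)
Your proof is correct and follows essentially the same duality argument as the paper: the same decomposition into two $(w-w_h)$ terms plus a nonlinear remainder tested against $w_h$, the same discrete dual stability bound $\|w_h\|_{0,2}\lesssim\|u-u_h\|_{0,2}$ obtained from \eqref{eqn:dualfem} and local monotonicity, and the same asymmetric splitting of the remainder via Sobolev embedding and the quasi-optimal energy estimate \eqref{eqn:error}. The only cosmetic differences are your H\"older exponents $(4,4,2)$ versus the paper's $(p^{*},p^{*},q^{*})$ with $p^{*}=6$ (or $p^{*}>4$ in 2D), and your mean-value/Lipschitz-$b'$ treatment of the remainder versus the paper's second-order Taylor expansion with $\|b''(\chi)\|_{0,\infty}$ --- both of which rest on the same unstated smoothness of $b$ on $[\underline{u},\overline{u}]$.
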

\begin{proof}
Without loss of generality we assume $b''(\chi)\neq 0.$
By taking $v = u -u_{h}$ in~\eqref{eqn:dual} we obtain that
\begin{align}
	\|u - u_{h}\|^{2}_{0,2} &= a(u-u_{h}, w) + (b'(u) (u-u_{h}), w)\nonumber\\
	&= a(u-u_{h}, w- w_{h}) + (b'(u)(u-u_{h}), w-w_{h})\nonumber \\
	& \qquad + a(u-u_{h}, w_{h}) + (b'(u) (u-u_{h}), w_{h}). \label{eqn:l2eq}
\end{align}
To bound the second term in \eqref{eqn:l2eq}, we use the $L^{\infty}$ bound of $u$ (cf. Theorem~\ref{thm:cont-infty}) to obtain
\begin{eqnarray*}
	 (b'(u)(u-u_{h}), w-w_{h}) &\leqs& \|b'(u)\|_{0,\infty} \|u -u_{h}\|_{0,2} \|w-w_{h}\|_{0,2}\\
	  &\lesssim& \|\nabla (u-u_{h})\|_{0,2}\|\nabla (w-w_{h})\|_{0,2},
\end{eqnarray*}
where we have used Poincar\'e inequality for $(u-u_{h})$ and $(w-w_{h})$ in the last step.
To deal with the last two terms in \eqref{eqn:l2eq},  notice that $u_{h}\in V_{h}$ is the solution to the discrete semilinear problem~\eqref{eqn:fem}, we have
$$		 
	a(u-u_{h}, w_{h})  = - (b(u)-b(u_{h}), w_{h}).
$$
Thus by Taylor expansion, we have
\begin{eqnarray*}
	a(u-u_{h}, w_{h}) + (b'(u) (u-u_{h}), w_{h}) &=& -(b(u) -b(u_{h}) -(b'(u) (u-u_{h}), w_{h} )\\
	& =& \frac{1}{2} (b''(\chi) (u-u_{h})^{2}, w_{h}),
\end{eqnarray*}
where $\chi$ satisfies that $\underline{u} \leqs \chi(x) \leqs \overline{u}$ a.e. in $\Omega$ due to the \emph{a priori} $L^{\infty}$ bounds of $u$ and $u_{h}$ by Theorem~\ref{thm:cont-infty} and Theorem~\ref{thm:disc-infty}.
Therefore, by H\"older inequality the last two terms in \eqref{eqn:l2eq} can be bounded as: 
\begin{eqnarray*}
	a(u-u_{h}, w_{h}) + (b'(u) (u-u_{h}), w_{h}) &\leqs& \frac{1}{2} \|b''(\chi)\|_{0,\infty} \|u-u_{h}\|_{0,{p^{*}}} ^{2}\|w_{h}\|_{0,{q*}}\\
	&\lesssim& \| \nabla(u - u_{h})\|_{0,2}^{2} \|w_{h}\|_{0,{q^{*}}},
\end{eqnarray*}
where we choose $p^{*} = 6$ for $d=3$ and $p^{*}>4$ when $d=2,$ and $\frac{2}{p^{*}} + \frac{1}{q^{*}} =1.$ In the last inequality, we have used the Sobolev embedding $\|u-u_{h}\|_{0,p^{*}} \lesssim\|\nabla(u-u_{h})\|_{0,2}$. Therefore, we obtain
\begin{equation}
	\label{eqn:dual3}
	\|u - u_{h}\|^{2}_{0,2}  \lesssim ( \| \nabla(w-w_{h})\|_{0,2} +  \|\nabla( u-u_{h})\|_{0,2} \|w_{h}\|_{0,q^{*}} )\|\nabla(u -u_{h})\|_{0,2}.
\end{equation}

	Now in \eqref{eqn:dual3}, let $w_{h}\in V_{h}$ be the solution to \eqref{eqn:dualfem}. 
Then the estimate for the quantity
${\|\nabla(w - w_{h})\|_{0,2}}$ is readily available 
from \eqref{eqn:femlinear}. 
It then remains to estimate the term 
${\|\nabla (u-u_{h})\|_{0,2} \|w_{h}\|_{0,q^{*}}}$. 
To estimate $\|w_{h}\|_{0,q^{*}}$, notice that by the choice of 
${p^{*}}$, ${1<q^{*}<2}$. 
Then by Poincar\'e inequality and coercivity of $a(\cdot, \cdot)$
$$
	\|w_{h}\|_{0,q^{*}} \lesssim  \|w_{h}\|_{0,2} \lesssim \|\nabla w_{h}\|_{0,2} \lesssim \tbar w_{h} \tbar.
$$
By Assumption~\ref{ass:bm} on $b$, we have
\begin{align*}
	\tbar w_{h} \tbar^{2} &= a(w_{h}, w_{h})
	 \leqs a(w_{h}, w_{h}) + (b'(u) w_{h}, w_{h}) \\
	&= (u -u _{h}, w_{h}) \leqs \|u-u_{h}\|_{0,2} \|w_{h}\|_{0,2}\\
	&\lesssim \|u-u_{h}\|_{0,2} \tbar w_{h} \tbar.
\end{align*}
Thus, this inequality implies that
$$
	\|w_{h}\|_{0,q^{*}} \lesssim \tbar w_{h} \tbar \lesssim \|u-u_{h}\|_{0,2}.
$$
Therefore, we obtain
\begin{equation}
\label{eqn:dual4}
	\| \nabla(u-u_{h})\|_{0,2} \|w_{h}\|_{0,q^{*}} \lesssim h^{s-1} \|u\|_{H^{s}(\Omega_{1}\cup \Omega_{2})} \|u-u_{h}\|_{0,2}.
\end{equation}
Combining inequalities \eqref{eqn:dual3}, \eqref{eqn:femlinear} and \eqref{eqn:dual4}, the inequality \eqref{eqn:l2} then follows. This concludes the proof. 
\end{proof}
\begin{remark}
\label{rk:l2error}
	Theorem ~\ref{thm:l2err} implies, in particular, that 
	\begin{equation}
	\label{eqn:l2lifting}
		\|u - u_{h}\|_{0,2} \lesssim h^{t} \|\nabla(u -u_{h})\|_{0,2},
	\end{equation}
	where $t = \min\{s,\tau\} -1$. Combining with the quasi-optimal error estimate \eqref{eqn:error}, we obtain the following $L^{2}$ error estimate: 
	\begin{equation}
	\label{eqn:l2err}
		\|u - u_{h}\|_{0,2} \lesssim h^{t + (s-1)} \|u\|_{H^{s}(\Omega_{1}\cup\Omega_{2})}.
	\end{equation}
\end{remark}
\begin{remark}
	Similar to Remark \ref{rk:noangle}, if $b(\cdot)$ satisfies the critical/subcritical growth condition \eqref{eqn:critical}, then the conclusion of Theorem~\ref{thm:l2err} holds without the Assumption~\ref{ass:angle}. We refer to \cite{BHSZ11a} for the details.
\end{remark}

\section{Two-grid algorithms}
   \label{sec:two-grid}

We now consider a two-grid algorithm (cf.~\cite{Xu.J1996b}) to solve the finite element discretization~\eqref{eqn:fem} numerically. Let $\cT_{H}$ be a quasi-uniform triangulation with mesh size $H$, and $\cT_{h}$ with mesh size $h<H$ is a uniform refinement of $\cT_{H}$. We assume the triangulations satisfy Assumption~\eqref{ass:angle}. The algorithm consists of an \emph{exact} coarse solver on a coarse grid $\cT_{H}$, and a Newton update on the fine grid $\cT_{h}$.
In what follows, we will denote $u_{h}, u_{H}$ as the \emph{exact} finite element solutions to~\eqref{eqn:fem} on the grids $\cT_{h}$ and $\cT_{H},$ respectively. For simplicity, let us denote 
$$
	\langle F(u), \chi \rangle := a(u, \chi) + (b(u), \chi),
$$
and its linearization 
$$
	\langle F'(u) v, \chi \rangle:= a(v, \chi) + (b'(u)v, \chi).
$$
The two grid algorithm considered in this paper is as follows:
\begin{algorithm2e}\label{alg:1newton}
\caption{$u^{h}=\textsf{TwoGrid}\left(\cT_{H}, \cT_{h}\right)$}
\linesnotnumbered
Find $u_{H} \in V_{H}$ such that $$\langle F(u_{H}), v_{H} \rangle =0, \quad \forall v_{H} \in V_{H};$$

Find $u^{h} \in V_{h}$ such that $$\langle F'(u_{H}) u^{h}, v_{h}\rangle = \langle F'(u_{H}) u_{H}, v_{h}\rangle -\langle F(u_{H}), v_{h}\rangle \quad \forall v_{h} \in V_{h};$$
\end{algorithm2e}
The Algorithm~\ref{alg:1newton} solves the original nonlinear problem on the coarse grid $\cT_{H},$ and then performs one Newton iteration on the fine grid.

Fix any $\chi_{h}\in V_h,$ let $\eta(t):= \langle F(u_{H} + t(u_{h} - u_{H})), \chi_{h} \rangle.$ Then by Taylor expansion, we have
\begin{eqnarray*}
	0 &=& \langle F(u_{h}), \chi_{h} \rangle  =  \eta(1) = \eta(0) + \eta'(0) + \int_{0}^{1} \eta''(t)(1-t) dt\\
	 &=& \langle F(u_{H}), \chi_{h} \rangle + \langle F'(u_{H}) (u_{h} - u_{H}), \chi_{h} \rangle +  \int_{0}^{1} \eta''(t)(1-t) dt.
\end{eqnarray*}
Notice that by direct calculation, the remainder term, denoted by $R(u_{H}, u_{h}, \chi_{h}),$ has the following form:
\begin{eqnarray*}
	R(u_{H}, u_{h}, \chi_{h}) &:= &	 \int_{0}^{1} \eta''(t)(1-t) dt\\
	& = & \int_{0}^{1} (b''(u_{H} + t(u_{h} - u_{H})) (u_{h} - u_{H})^{2}, \chi_{h}) dt.
\end{eqnarray*}
By Theorem~\ref{thm:disc-infty}, we have $u_{h}, u_{H} \in [\underline{u}, \overline{u}]$. Therefore, we have the following estimate on $R(u_{H}, u_{h}, \chi_{h}):$
\begin{equation}
\label{eqn:residual}
	|R(u_{H}, u_{h}, \chi_{h})| \le C\|u_{H} - u_{h}\|_{0, 2p}^{2} \|\chi_{h}\|_{0,q},
\end{equation}
for any $p,q\ge 1$ with $\frac{1}{p} + \frac{1}{q} =1.$

\begin{lemma}
\label{lm:2grid}
	Let $b$ satisfy the Assumption~\ref{ass:barrier} and \ref{ass:bm}, and $\cT_{h}, \cT_{H}$ satisfy the Assumption~\ref{ass:angle}. Let $u_{H}\in V_{H}$ and $u_{h}\in V_{h}$ be the exact solutions to~\eqref{eqn:fem} on $\cT_{H}$ and $\cT_{h}$ respectively, and $u^{h}\in V_{h}$ be the approximated solution obtained by Algorithm~\ref{alg:1newton}. Then, we have the following estimate
\begin{equation}
	\tbar u_{h} - u^{h} \tbar \lesssim \|u_{h} - u_{H}\|_{0,4}^{2}.
\end{equation}
\end{lemma}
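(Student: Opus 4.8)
The plan is to exploit the Taylor identity already derived above, which expresses the vanishing fine-grid residual $\langle F(u_h),\chi_h\rangle$ as the sum of the coarse residual, its linearization, and the quadratic remainder $R$. First I would subtract the defining equation of the one-step Newton update $u^h$ in Algorithm~\ref{alg:1newton} from this Taylor identity, tested against the same $\chi_h = v_h \in V_h$. Because $F'(u_H)$ acts linearly in its first slot, the coarse residual terms $\langle F(u_H), v_h\rangle$ cancel exactly, leaving the clean error equation
\[
    \langle F'(u_H)(u_h - u^h), v_h\rangle = -R(u_H, u_h, v_h), \qquad \forall\, v_h \in V_h.
\]
Writing $e_h := u_h - u^h$, the crucial observation is that $e_h \in V_h$, since both $u_h$ and $u^h$ lie in $V_h$; hence $e_h$ is itself an admissible test function.

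Next I would test the error equation with $v_h = e_h$. Unfolding the definition $\langle F'(u_H) v, \chi\rangle = a(v,\chi) + (b'(u_H)v, \chi)$ yields
\[
    \tbar e_h \tbar^2 + (b'(u_H)\, e_h,\, e_h) = -R(u_H, u_h, e_h).
\]
Here the discrete \emph{a priori} $L^\infty$ bounds of Theorem~\ref{thm:disc-infty} place $u_H \in [\underline{u}, \overline{u}]$ a.e., so the Local Monotonicity Assumption~\ref{ass:bm} forces $b'(u_H) \geqs 0$ a.e.\ in $\Omega$. Consequently the zeroth-order term is nonnegative and may be discarded, giving $\tbar e_h \tbar^2 \leqs |R(u_H, u_h, e_h)|$.

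To close the estimate I would apply the remainder bound~\eqref{eqn:residual} with the specific choice $p = q = 2$, which produces precisely the $L^4$ norm appearing in the statement:
\[
    |R(u_H, u_h, e_h)| \leqs C\, \|u_h - u_H\|_{0,4}^2\, \|e_h\|_{0,2}.
\]
Combining the last two displays gives $\tbar e_h \tbar^2 \lesssim \|u_h - u_H\|_{0,4}^2\, \|e_h\|_{0,2}$, and a final application of the Poincar\'e inequality together with the energy-norm equivalence~\eqref{eqn:equiv}, namely $\|e_h\|_{0,2} \lesssim \|\nabla e_h\|_{0,2} \lesssim \tbar e_h \tbar$, lets me cancel one power of $\tbar e_h \tbar$ to arrive at $\tbar e_h \tbar \lesssim \|u_h - u_H\|_{0,4}^2$, as claimed (the case $\tbar e_h \tbar = 0$ being trivial).

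This is essentially a routine Newton-type quadratic-convergence estimate, so there is no single deep obstacle; the only points requiring care are (i) recognizing that the monotonicity assumption, activated through the discrete $L^\infty$ bound on $u_H$, is exactly what renders the linearized operator $F'(u_H)$ coercive on $V_h$ and thereby permits discarding the first-order term, and (ii) selecting the conjugate exponents $p = q = 2$ in~\eqref{eqn:residual} so that the quadratic remainder is measured in the $L^4$ norm that matches the right-hand side of the lemma.
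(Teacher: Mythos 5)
Your proposal is correct and follows essentially the same route as the paper's own proof: the identical error equation $\langle F'(u_H)(u_h - u^h), \chi_h\rangle = -R(u_H,u_h,\chi_h)$, testing with $\chi_h = u_h - u^h$, discarding the nonnegative term $(b'(u_H)\chi_h,\chi_h)$ via the discrete $L^\infty$ bounds and Assumption~\ref{ass:bm}, invoking \eqref{eqn:residual} with $p=q=2$, and finishing with the Poincar\'e inequality. Your write-up is if anything slightly more explicit than the paper's (e.g., in spelling out the cancellation of one power of $\tbar u_h - u^h \tbar$), but there is no substantive difference.
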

\begin{proof}
From Algorithm~\ref{alg:1newton}, we have
\begin{eqnarray*}
	\langle F'(u_{H}) (u_{h} - u^{h}), \chi_{h}\rangle & =& \langle F'(u_{H}) (u_{h} - u_{H}), \chi_{h} \rangle + \langle F(u_{H}), \chi_{h} \rangle \\
	&=& -R(u_{H}, u_{h}, \chi_{h}), \quad \forall \chi_{h} \in V_{h}.
\end{eqnarray*}
Then by taking $\chi_{h} = u_{h} - u^{h}\in V_{h}$ in the above equality, we obtain that
\begin{eqnarray*}
	\tbar u_{h} - u^{h} \tbar ^{2} &=& a(u_{h} - u^{h}, u_{h} - u^{h})\\
	& =& \langle F'(u_{H}) \chi_{h} , \chi_{h}\rangle - (b'(u_{H})\chi_{h}, \chi_{h})\\
	&\leqs & -R(u_{H}, u_{h}, u_{h} - u^{h}) \\
	&\lesssim & \|u_{h} - u_{H}\|_{0,2p}^{2} \|u_{h} - u^{h}\|_{0,q},
\end{eqnarray*}
where in the third step we used the Assumption~\ref{ass:bm}, and in the last step we used \eqref{eqn:residual}.
In particular, if we pick $p=q =2,$ the conclusion then follows by Poincar\'e inequality on $\|u_{h} - u^{h}\|_{0,2}.$
\end{proof}

Lemma~\ref{lm:2grid} suggests that we will need the $L^{4}$ error estimates $\|u-u_{h}\|_{0,4}$. For this purpose, we make use of the following Ladyzhenskaya's inequalities:
\begin{lemma}[{\cite[Lemma 1-2]{Ladyzhenskaya.O1969}}]
\label{lm:lady}
	For any $v\in H_{0}^{1}(\Omega),$ it holds
	\begin{equation}
\label{eqn:ladyzhenskaya-2D}
	\|v\|_{0,4} \leqs \sqrt[4]{2} \|v\|_{0,2}^{\frac{1}{2}} \|\nabla v\|_{0,2}^{\frac{1}{2}},\qquad d =2;
\end{equation}
and 
\begin{equation}
\label{eqn:ladyzhenskaya-3D}
	\|v\|_{0,4} \leqs \sqrt{2} \|v\|_{0,2}^{\frac{1}{4}} \|\nabla v\|_{0,2}^{\frac{3}{4}}, \qquad d  =3.
\end{equation}
\end{lemma}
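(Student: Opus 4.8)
The plan is to reduce both inequalities to smooth, compactly supported functions on all of $\R^{d}$, prove the planar estimate by an elementary one-dimensional fundamental-theorem-of-calculus argument, and then deduce the three-dimensional estimate by slicing and invoking the planar case. As a first step I would reduce to $v \in C_c^\infty(\R^{d})$: since $\Omega$ is a Lipschitz domain, extension by zero embeds $H_0^1(\Omega)$ into $H^1(\R^{d})$ while preserving the $L^2$-norm, the $L^4$-norm, and $\|\nabla\cdot\|_{0,2}$; as $C_c^\infty(\Omega)$ is dense in $H_0^1(\Omega)$, it suffices to prove both inequalities for compactly supported smooth functions on $\R^{d}$, and then pass to the limit using continuity of all the norms appearing on either side.

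For $d=2$, the essential step is the pointwise bound coming from integrating $\partial_1(v^2)$ from $-\infty$ to $x_1$ and using compact support,
\[
v(x_1,x_2)^2 = \int_{-\infty}^{x_1} 2\,v\,\partial_1 v\,dt \leq 2\int_{\R} |v|\,|\partial_1 v|\,dx_1,
\]
together with the analogous bound obtained by integrating in $x_2$. Multiplying the two bounds, integrating over $\R^2$, factoring the resulting product of single-variable integrals, and applying the Cauchy--Schwarz inequality in each factor gives
\[
\|v\|_{0,4}^4 \leq 4\,\|v\|_{0,2}^2\,\|\partial_1 v\|_{0,2}\,\|\partial_2 v\|_{0,2}.
\]
The arithmetic--geometric mean inequality $\|\partial_1 v\|_{0,2}\,\|\partial_2 v\|_{0,2} \leq \tfrac12\|\nabla v\|_{0,2}^2$ then yields $\|v\|_{0,4}^4 \leq 2\,\|v\|_{0,2}^2\,\|\nabla v\|_{0,2}^2$, which is precisely \eqref{eqn:ladyzhenskaya-2D} after taking fourth roots.

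For $d=3$, I would freeze $x_3$ and treat $v(\cdot,\cdot,x_3)$ as a planar function, applying \eqref{eqn:ladyzhenskaya-2D} on each slice; integrating the result over $x_3$ and writing $f(x_3):=\|v(\cdot,\cdot,x_3)\|_{L^2(\R^2)}^2$ produces
\[
\|v\|_{0,4}^4 \leq 2\,\Bigl(\sup_{x_3} f(x_3)\Bigr)\,\|\nabla v\|_{0,2}^2,
\]
where the in-plane gradient has been bounded by the full gradient. The remaining ingredient is control of the slice mass: since $f'(x_3)=\int_{\R^2}2\,v\,\partial_3 v$ and $f$ vanishes at infinity, one has $\sup_{x_3} f \leq \int_{\R}|f'|\,dx_3 \leq 2\,\|v\|_{0,2}\,\|\partial_3 v\|_{0,2}$ by Cauchy--Schwarz. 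Combining these and using $\|\partial_3 v\|_{0,2}\leq\|\nabla v\|_{0,2}$ gives $\|v\|_{0,4}^4 \leq 4\,\|v\|_{0,2}\,\|\nabla v\|_{0,2}^3$, which is \eqref{eqn:ladyzhenskaya-3D}.

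I do not expect a genuine obstacle here, since the inequalities are classical; the only points requiring care are the justification of the one-dimensional representations (which rely on the vanishing at infinity supplied by the reduction to $C_c^\infty$) and, in three dimensions, the verification that almost every slice is admissible for the planar inequality so that the integration in $x_3$ is legitimate. The remaining difficulty is purely bookkeeping: choosing the one-sided rather than two-sided form of the fundamental theorem of calculus so that the constants land exactly on $\sqrt[4]{2}$ and $\sqrt{2}$, and confirming that the zero-extension from $\Omega$ to $\R^{d}$ leaves each norm in the statement unchanged.
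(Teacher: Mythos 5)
Your proof is correct, and the constants come out exactly right: the 2D argument gives $\|v\|_{0,4}^4 \leq 4\|v\|_{0,2}^2\|\partial_1 v\|_{0,2}\|\partial_2 v\|_{0,2} \leq 2\|v\|_{0,2}^2\|\nabla v\|_{0,2}^2$, and the slicing argument in 3D gives $\|v\|_{0,4}^4 \leq 4\|v\|_{0,2}\|\nabla v\|_{0,2}^3$, i.e.\ precisely the constants $\sqrt[4]{2}$ and $\sqrt{2}$ in the statement. The paper itself offers no proof of this lemma --- it simply cites Lemmas 1--2 of Ladyzhenskaya's monograph --- and your argument (reduction to $C_c^\infty$, the fundamental-theorem-of-calculus pointwise bound plus Cauchy--Schwarz in 2D, then slicing with control of the slice mass $\sup_{x_3} f$ in 3D) is essentially the classical proof given in that cited source, so there is nothing to reconcile.
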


Recall that we assume the solution to the original nonlinear problem \eqref{eqn:model} satisfies $u\in H_{0}^{1}(\Omega) \cap H^{s}(\Omega_{1} \cup \Omega_{2})$, and the dual linear problem ~\eqref{eqn:dual} has the regularity $w\in H_{0}^{1}(\Omega) \cap H^{\tau}(\Omega_{1} \cup \Omega_{2})$ for some $s,\tau>1$. We let $t = \min\{s, \tau\} -1$ as defined in \eqref{eqn:l2lifting}.  As a corollary of Lemma~\ref{lm:lady}, we obtain the $L^{4}$ error estimate:
\begin{corollary}
\label{cor:l4}
	Let $b$ satisfy the Assumptions~\ref{ass:barrier} and \ref{ass:bm}, and $\cT_{h}$ satisfy the Assumption~\ref{ass:angle}. Let $u \in H_{0}^{1}(\Omega)\cap H^{s}(\Omega_{1} \cup \Omega_{2})$ with $s >1$ be the solution to~\eqref{eqn:weak},  and $u_{h}\in V_{h}$ be the solution to \eqref{eqn:fem}. Suppose that the dual problem \eqref{eqn:dual} satisfies the $\tau$-regularity \eqref{eqn:reg} for some $\tau >1$.  Then the following error estimates hold:
	\begin{equation}
\label{eqn:l4-2D}
	\|u -u_{h}\|_{0,4} \lesssim h^{\frac{t}{2} +(s-1)} \|u\|_{H^{s}(\Omega_{1}\cup \Omega_{2})},\qquad d =2;
\end{equation}
and 
\begin{equation}
\label{eqn:l4-3D}
	\|u-u_{h}\|_{0,4} \lesssim h^{\frac{t}{4} + (s-1)} \|u\|_{H^{s}(\Omega_{1}\cup \Omega_{2})}, \qquad d=3,
\end{equation}
where $t = \min\{s, \tau\} -1.$
\end{corollary}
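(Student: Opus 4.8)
The plan is to obtain both $L^4$ bounds purely by interpolating between the $H^1$-seminorm estimate and the $L^2$ estimate already in hand, using Ladyzhenskaya's inequalities from Lemma~\ref{lm:lady}. The two ingredients I would assemble first are, on the one hand, the quasi-optimal energy estimate of Theorem~\ref{thm:quasi_optimal}, which together with the norm equivalence \eqref{eqn:equiv} gives
$$\|\nabla(u-u_{h})\|_{0,2} \lesssim \tbar u-u_{h}\tbar \lesssim h^{s-1}\|u\|_{H^{s}(\Omega_{1}\cup\Omega_{2})},$$
and, on the other hand, the $L^2$ estimate \eqref{eqn:l2err} recorded in Remark~\ref{rk:l2error},
$$\|u-u_{h}\|_{0,2} \lesssim h^{t+(s-1)}\|u\|_{H^{s}(\Omega_{1}\cup\Omega_{2})}, \qquad t=\min\{s,\tau\}-1.$$
Both require exactly the hypotheses of the corollary (Assumptions~\ref{ass:barrier}, \ref{ass:bm}, \ref{ass:angle}, and the $\tau$-regularity \eqref{eqn:reg}), so no new assumptions enter.

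Next, since $u-u_{h}\in H_0^1(\Omega)$, I would apply Lemma~\ref{lm:lady} directly to $v=u-u_{h}$. For $d=2$, inequality \eqref{eqn:ladyzhenskaya-2D} yields
$$\|u-u_{h}\|_{0,4} \leqs \sqrt[4]{2}\,\|u-u_{h}\|_{0,2}^{1/2}\,\|\nabla(u-u_{h})\|_{0,2}^{1/2},$$
and inserting the two displayed bounds gives the $h$-exponent $\tfrac12\bigl(t+(s-1)\bigr)+\tfrac12(s-1)=\tfrac{t}{2}+(s-1)$, which is \eqref{eqn:l4-2D}. For $d=3$, inequality \eqref{eqn:ladyzhenskaya-3D} yields
$$\|u-u_{h}\|_{0,4} \leqs \sqrt{2}\,\|u-u_{h}\|_{0,2}^{1/4}\,\|\nabla(u-u_{h})\|_{0,2}^{3/4},$$
and the same substitution produces the exponent $\tfrac14\bigl(t+(s-1)\bigr)+\tfrac34(s-1)=\tfrac{t}{4}+(s-1)$, which is \eqref{eqn:l4-3D}. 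In both cases the fractional powers of $\|u\|_{H^{s}(\Omega_{1}\cup\Omega_{2})}$ add to a single power, so the stated form of the estimates follows immediately.

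There is no genuine obstacle at this final step: all of the analytic difficulty---controlling the nonlinearity through the $L^{\infty}$ barriers and the local monotonicity, and running the duality argument that produces the $L^2$ lift---has already been absorbed into Theorem~\ref{thm:quasi_optimal} and Remark~\ref{rk:l2error}. The only points requiring care are that $v=u-u_{h}$ indeed lies in $H_0^1(\Omega)$ so that Ladyzhenskaya's inequalities apply (immediate, since both $u$ and $u_{h}$ vanish on $\partial\Omega$ under the standing normalization $g=0$), and the bookkeeping of the exponents above. I would also note, paralleling Remark~\ref{rk:noangle}, that if $b$ satisfies the critical/subcritical growth condition \eqref{eqn:critical}, then Assumption~\ref{ass:angle} may be dropped here as well, because the underlying energy and $L^2$ estimates then hold without it.
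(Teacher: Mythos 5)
Your proposal is correct and follows essentially the same route as the paper: both apply Ladyzhenskaya's inequalities from Lemma~\ref{lm:lady} to $u-u_h$ and combine the energy estimate of Theorem~\ref{thm:quasi_optimal} with the $L^2$ estimate from the duality argument, and your exponent bookkeeping ($\tfrac12(t+s-1)+\tfrac12(s-1)=\tfrac{t}{2}+(s-1)$ in 2D, $\tfrac14(t+s-1)+\tfrac34(s-1)=\tfrac{t}{4}+(s-1)$ in 3D) matches the stated rates. The only cosmetic difference is that the paper first substitutes the lifting form \eqref{eqn:l2lifting} to get $\|u-u_h\|_{0,4}\lesssim h^{t/2}\|\nabla(u-u_h)\|_{0,2}$ (resp.\ $h^{t/4}$) and then invokes the energy estimate, whereas you insert \eqref{eqn:l2err} and the energy bound simultaneously; these are the same argument.
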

\begin{proof}
	The proof is simply a combination of Lemma~\ref{lm:lady} and the quasi-optimal error estimate \eqref{eqn:error} in Theorem~\ref{thm:quasi_optimal} and the $L^{2}$ error estimate in \eqref{eqn:l2lifting}. When $d=2$, by \eqref{eqn:ladyzhenskaya-2D} we have
	$$
		\|u - u_{h}\|_{0,4} \leqs \sqrt[4]{2} \|u-u_{h}\|_{0,2}^{\frac{1}{2}} \|\nabla (u - u_{h})\|_{0,2}^{\frac{1}{2}} \lesssim h^{t/2} \|\nabla (u-u_{h})\|_{0,2}.
	$$
	Thus, when $d=2$ we obtain 
	$$
	\|u -u_{h}\|_{0,4} \lesssim h^{\frac{t}{2} + (s-1)} \|u\|_{H^{s}(\Omega_{1}\cup \Omega_{2})}
	$$
	Similarly, when $d=3$ by~\eqref{eqn:ladyzhenskaya-3D} and \eqref{eqn:l2lifting} we obtain
	$$
		\|u - u_{h}\|_{0,4} \le Ch^{\frac{t}{4}} \|\nabla  (u - u_{h})\|_{0,2}.
	$$
	The conclusion then follows from the conclusion of Theorem~\ref{thm:quasi_optimal}.
\end{proof}
Finally, we obtain  the following main result.
\begin{theorem}
\label{thm:main}
	Let $b$ satisfy the Assumptions~\ref{ass:barrier} and \ref{ass:bm}, and $\cT_{h}, \cT_{H}$ satisfy the Assumption~\ref{ass:angle}. Let $u \in H_{0}^{1}(\Omega)\cap H^{s}(\Omega_{1} \cup \Omega_{2})$ with $s >1$ be the solution to~\eqref{eqn:weak},  and $u^{h}$ be the solution to Algorithm~\ref{alg:1newton}. Suppose that the dual problem \eqref{eqn:dual} satisfies the $\tau$-regularity \eqref{eqn:reg} for some $\tau >1$.  We have the following estimates
	\begin{equation}
		\tbar u - u^{h}\tbar \lesssim (h^{s-1} +  H^{t + 2(s -1)}) \|u\|_{H^{s}(\Omega_{1}\cup\Omega_{2})}, \qquad d=2,
	\end{equation}
	and
	\begin{equation}
		\tbar u - u^{h}\tbar \lesssim (h^{s-1} + H^{\frac{t}{2} + 2(s -1)}) \|u\|_{H^{s}(\Omega_{1}\cup\Omega_{2})}, \qquad d=3,
	\end{equation}
	where $t = \min\{s, \tau\} -1 >0.$
\end{theorem}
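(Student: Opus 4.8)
The plan is to split the error with the triangle inequality into the pure Galerkin error on the fine grid and the two-grid perturbation, and then estimate each piece with the tools already assembled. Writing
\begin{equation*}
\tbar u - u^{h} \tbar \leqs \tbar u - u_{h}\tbar + \tbar u_{h} - u^{h}\tbar,
\end{equation*}
the first term is controlled directly by the quasi-optimal estimate \eqref{eqn:error} of Theorem~\ref{thm:quasi_optimal}, which gives $\tbar u - u_{h}\tbar \lesssim h^{s-1}\|u\|_{H^{s}(\Omega_{1}\cup\Omega_{2})}$. This is precisely the term that forces the fine-grid rate $h^{s-1}$ to appear in both claimed bounds.

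For the second term I would invoke Lemma~\ref{lm:2grid}, which yields $\tbar u_{h} - u^{h}\tbar \lesssim \|u_{h} - u_{H}\|_{0,4}^{2}$, reducing everything to bounding the coarse/fine discrepancy in the $L^{4}$ norm. I would then use $\|u_{h} - u_{H}\|_{0,4} \leqs \|u - u_{h}\|_{0,4} + \|u - u_{H}\|_{0,4}$ and apply the dimension-dependent $L^{4}$ error estimates from Corollary~\ref{cor:l4} to each summand. Since $h < H$ and the exponents are positive, the coarse-grid term dominates: for $d=2$ one gets $\|u_{h} - u_{H}\|_{0,4} \lesssim H^{t/2 + (s-1)}\|u\|_{H^{s}(\Omega_{1}\cup\Omega_{2})}$, and for $d=3$ one gets $\|u_{h} - u_{H}\|_{0,4} \lesssim H^{t/4 + (s-1)}\|u\|_{H^{s}(\Omega_{1}\cup\Omega_{2})}$.

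Squaring these and recombining produces $\tbar u_{h} - u^{h}\tbar \lesssim H^{t + 2(s-1)}\|u\|_{H^{s}(\Omega_{1}\cup\Omega_{2})}^{2}$ for $d=2$ and $\tbar u_{h} - u^{h}\tbar \lesssim H^{t/2 + 2(s-1)}\|u\|_{H^{s}(\Omega_{1}\cup\Omega_{2})}^{2}$ for $d=3$, which match the two claimed coarse-grid exponents exactly. Adding this to the fine-grid contribution completes the argument. The one delicate point is the bookkeeping of the norm power: the two-grid term carries $\|u\|_{H^{s}(\Omega_{1}\cup\Omega_{2})}^{2}$ rather than the first power appearing in the statement, so I would absorb one factor of the fixed, finite solution norm into the hidden constant, consistent with the $\lesssim$ convention fixed earlier. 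I expect the main obstacle to be purely organizational — correctly threading the $L^{2}$-to-$L^{4}$ interpolation exponents from Ladyzhenskaya's inequalities (which is what distinguishes $t/2$ from $t/4$, and hence the two final exponents) through the squaring step — rather than any genuinely new estimate, since every analytic ingredient is supplied by Theorem~\ref{thm:quasi_optimal}, Corollary~\ref{cor:l4}, and Lemma~\ref{lm:2grid}.
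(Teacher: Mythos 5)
Your proposal is correct and follows essentially the same route as the paper's proof: triangle inequality to split off $\tbar u-u_h\tbar$, Theorem~\ref{thm:quasi_optimal} for that piece, Lemma~\ref{lm:2grid} plus a second triangle inequality in $L^4$ and Corollary~\ref{cor:l4} for the two-grid piece. Your explicit remark about absorbing the extra factor of $\|u\|_{H^{s}(\Omega_{1}\cup\Omega_{2})}$ from the squaring into the hidden constant is a point the paper passes over silently, but it is exactly what the paper's ``the conclusions immediately follow'' implicitly does.
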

\begin{proof}
By triangle inequality, we have
\begin{eqnarray}
	\tbar u - u^{h} \tbar &\le& \tbar u - u_{h} \tbar + \tbar u_{h} - u^{h} \tbar \nonumber\\
	& \lesssim& \tbar u - u_{h} \tbar + \|u_{h} - u_{H}\|_{0,4}^{2}\nonumber\\
	& \lesssim&   \tbar u - u_{h} \tbar + \|u - u_{h}\|_{0,4}^{2} + \|u-u_{H}\|_{0,4}^{2}. \label{eqn:u-ush}
\end{eqnarray}
The first term in the right hand side of~\eqref{eqn:u-ush} has been estimated in Theorem ~\ref{thm:quasi_optimal}. Thus the conclusions immediately follow by applying Corollary~\ref{cor:l4} to the last two terms in \eqref{eqn:u-ush}.
\end{proof}
\begin{remark}
	Based on Theorem~\ref{thm:main}, we  may choose $H\leqs h^{\frac{s-1}{t+2(s-1)}}$ in 2D and $H \le h^{\frac{s-1}{t/2 + 2(s-1)}}$ in 3D, but still achieve quasi-optimal error estimate. In particularly, if the linear dual problem \eqref{eqn:dual} has the same or more regularity than the primal nonlinear problem \eqref{eqn:model}, i.e., $\tau\geqs s>1$, then $t=s-1$. So, in 2D case we may choose $H\leqs h^{1/3}$ and $H\leqs h^{2/5}$ in 3D.
\end{remark}

\section{Numerical Experiments}
\label{sec:num}

In this section, we present some numerical experiments to justify the theories. 
%
%
Here we consider solving the following semilinear equation
$$-\nabla\cdot (D\nabla u) + u^{11} =1000\delta_{0},\quad  u|_{\partial \Omega} =0,$$
where the diffusion coefficient  $D=1000$ inside $[-1/2, 1/2]^2$ and 1 outside, and $\delta_{0}$ is the delta function on origin. See Figure~\ref{fig:sol2d} for the solution of this equation. 
\begin{figure}[htp]
	\begin{center}
		\includegraphics[width=0.65\linewidth]{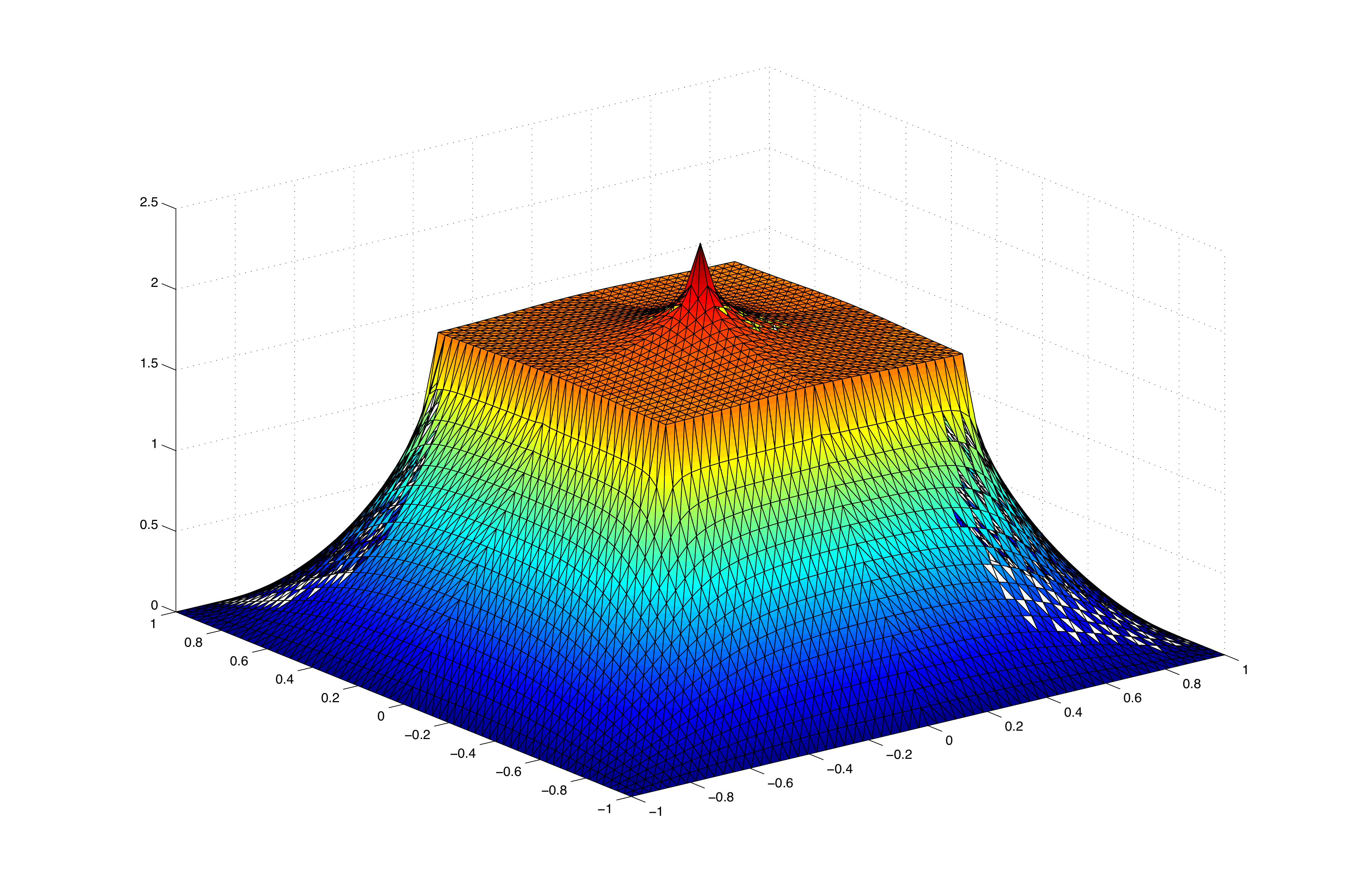}
	\end{center}
	\caption{Solution to the 2D semilinear interface problem.} \label{fig:sol2d}
\end{figure}
Figure~\ref{fig:h1} and~\ref{fig:l4} show the $H^{1}$ and $L^{4}$ errors, respectively.
 \begin{figure}[htp]
	\begin{center}
    	         \includegraphics[width=0.64\linewidth]{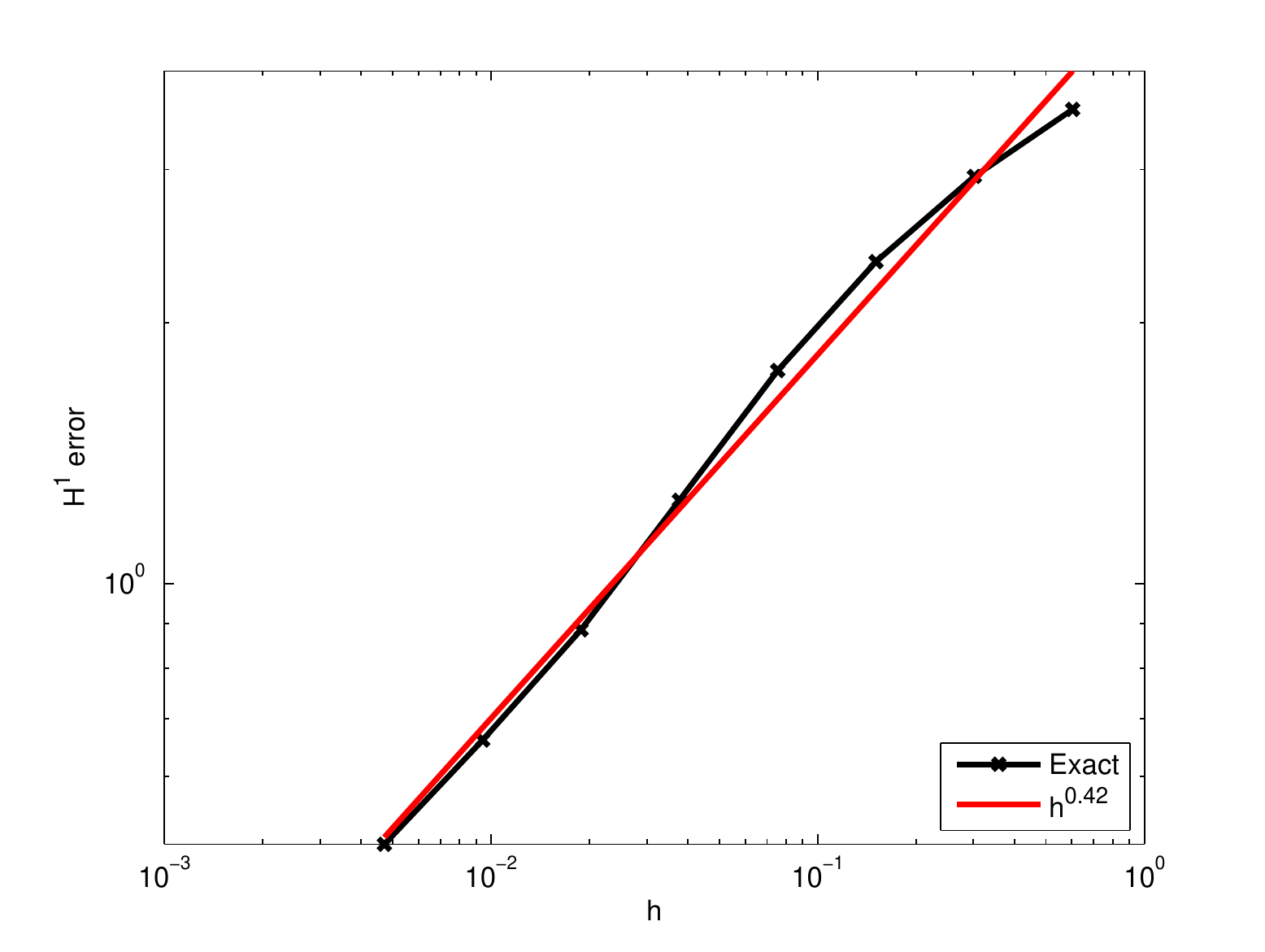}
	         \caption{Errors in $H^{1}$-norm.}\label{fig:h1}
	\end{center}
\end{figure}	
\begin{figure}[htp]
	\begin{center}
    	         \includegraphics[width=0.75\linewidth]{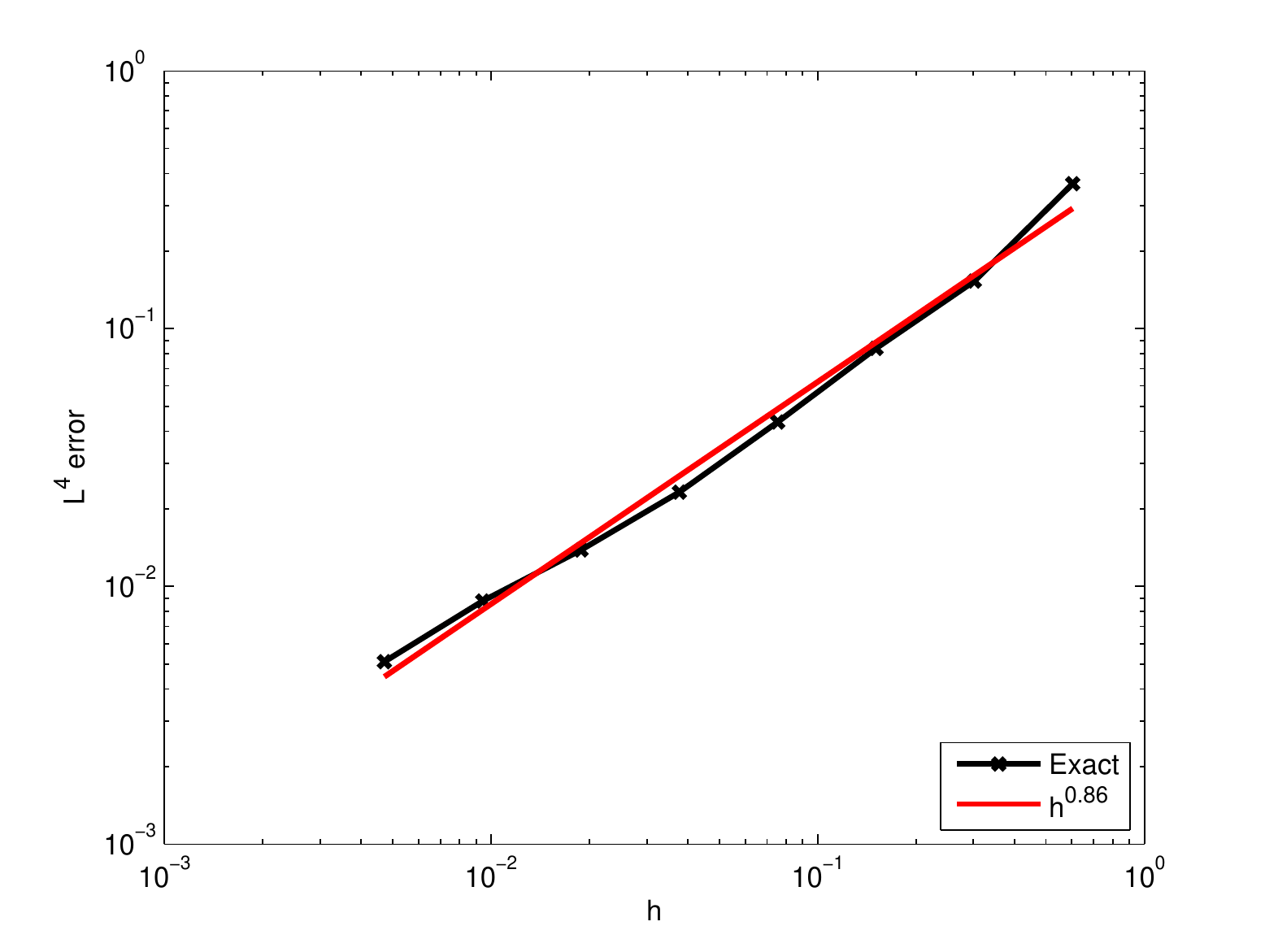}
	         \caption{Errors in $L^{4}$ norm.}\label{fig:l4}
	\end{center}
\end{figure}	
Figure~\ref{fig:2grid} shows the comparison of the exact $H^{1}$ error with the error of the two-grid solution produced by the Algorithm~\ref{alg:1newton}. Here, the mesh size $H$ of the coarse grid problem is chosen to be closest to the theoretical ones obtained from Theorem~\ref{thm:main} if not exactly the same.  As we can see from this figure, the two-grid solution is very close to the exact solution. Therefore, by the appropriately choice of the coarse problem, solving the nonlinear problem could be reduced to solving a linear problem on the fine mesh without loss of accuracy. Note that the linearized problem on the fine mesh could be solved efficiently by multilevel preconditioning techniques, even in the presence of large jump coefficients (cf. \cite{Xu.J;Zhu.Y2008}). In this way, we reduced greatly the overall computational cost for solving the nonlinear PDEs. 
\begin{figure}[htp]
	\begin{center}
		\includegraphics[width=0.7\linewidth]{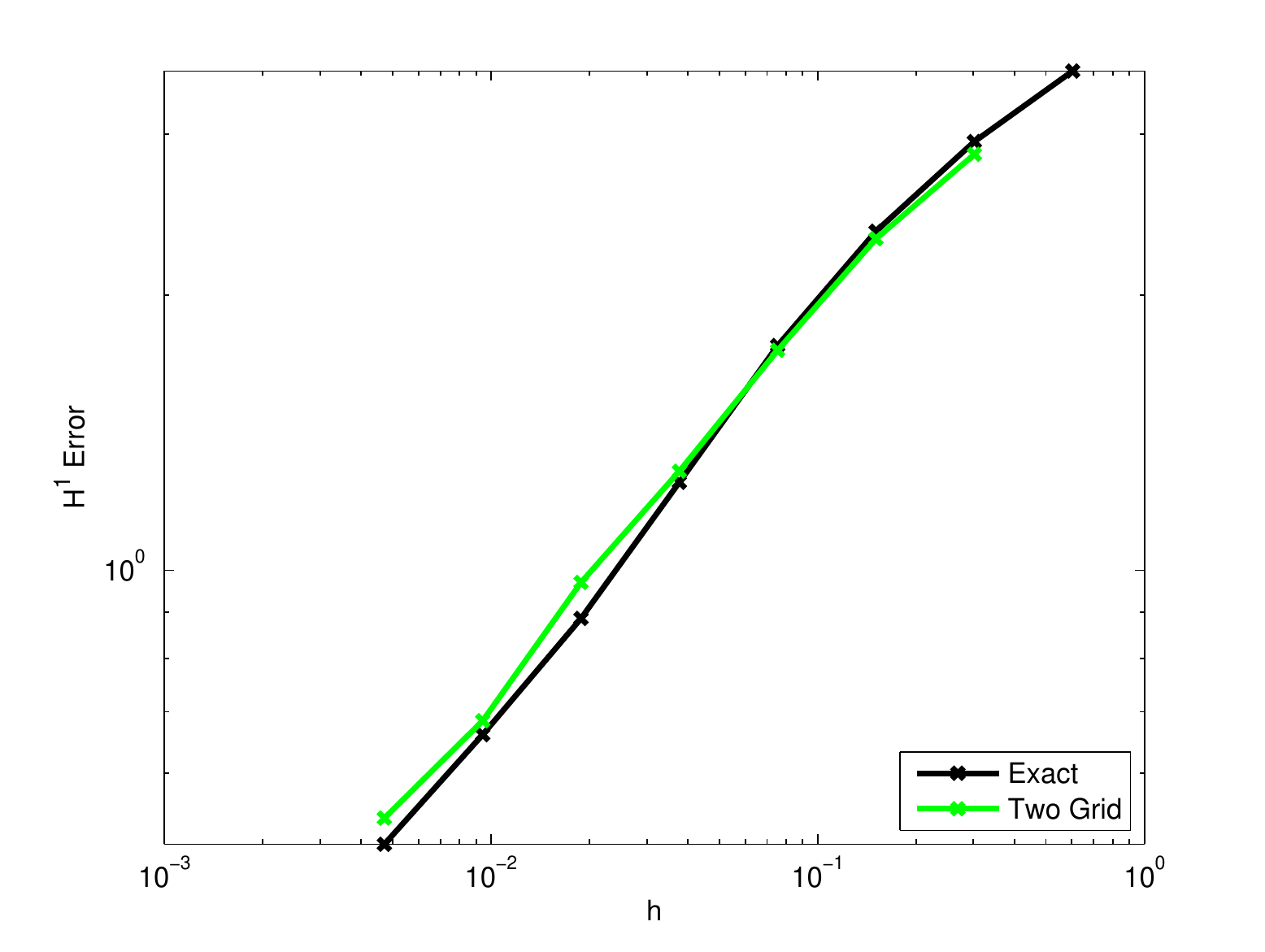} 
		\caption{Two-grid Error.}\label{fig:2grid}
	\end{center}
\end{figure}	

\section{Conclusion and Extension}
\label{sec:conc}

In this article we considered a two-grid finite element method for solving 
semilinear interface problems in $d$ space dimensions, for $d=2$ or $d=3$.
We first described in some detail the target problem class with 
discontinuous diffusion
coefficients, which included critical (and subcritical) 
nonlinearity examples, as well problems containing supercritical nonlinearity 
(such as the Poisson-Boltzmann equation and the semi-conductor device modeling
equations).
We then developed a basic quasi-optimal \emph{a priori} error estimate
for Galerkin approximations.
In the critical and subcritical cases, we follow~\cite{BHSZ11a}
and control the nonlinearity using only pointwise control of the 
continuous solution and a local Lipschitz 
property, rather than through pointwise control of the discrete solution;
this eliminates the requirement that the discrete solution satisfy a 
discrete form of the maximum principle, hence eliminating the need for 
restrictive angle conditions in the underlying mesh.
However, the supercritical case continues to require such conditions in
order to control the nonlinearity.
We then designed a two-grid algorithm consisting of a coarse grid solver 
for the original nonlinear problem, and a fine grid solver for a 
linearized problem. 
We analyzed the quality of approximations generated by the algorithm,
and proved that the coarse grid may be taken to have much larger elements
than the fine grid, and yet one can still obtain approximation quality that 
is asymptotically as good as solving the original nonlinear problem on the
fine mesh.
The included numerical experiments support our theoretical results.

The algorithm we described, and its analysis in this article, 
combined four sets of tools: 
the work of Xu and Zhou on two-grid algorithms for 
semilinear problems~\cite{Xu.J1996b,Xu.J;Zhou.A2000,Xu.J;Zhou.A2001a};
the recent results for linear interface problems due to 
Li, Melenk, Wohlmuth, and Zou~\cite{Li.J;Melenk.J;Wohlmuth.B;Zou.J2010};
recent work on the Poisson-Boltzmann equation~\cite{CHX06b,Holst.M;McCammon.J;Yu.Z;Zhou.Y2012};
and recent results on \emph{a priori} estimates for semilinear problems,
including estimates without angle conditions in the case of
sub- and super-critical nonlinearity~\cite{BHSZ11a}.
Although the algorithm described in this paper is applicable to general
coupled nonlinear elliptic systems, our reliance on tools developed for 
scalar linear and semilinear problems restricts the validity of the
theoretical results to the class of semilinear problems described
in~\S\ref{sec:pde}.
In future work we will consider the case of coupled systems of scalar
semilinear PDE from this class, as well as more general nonlinear
elliptic systems.

To simplify the presentation and keep the paper focused, we assumed that the triangulations resolve the interface. For general interface $\Gamma$, namely, $\Gamma$ can not be resolved by the triangulation, we could use the concept of ``$\delta$-resolved triangulation'' (cf. \cite{Li.J;Melenk.J;Wohlmuth.B;Zou.J2010}).  The results in this article could be generalized in a direct way if the triangulation satisfies the ``$\delta$-resolved''. 
Without significant technical modifications to the results in the article, we could also relax the Local Monotonicity Assumption~\ref{ass:bm} to the following:
$$b'(\xi)> -\lambda_1,$$
where $\lambda_1$ is the smallest eigenvalue of the 
operator $-\nabla\cdot (D\nabla \cdot)$.

\section{Acknowledgments}
   \label{sec:ack}

MH was supported in part by NSF Awards~0715146 and 0915220,
and by DOD/DTRA Award HDTRA-09-1-0036.
RS and YZ were supported in part by NSF Award~0715146.

\bibliographystyle{abbrv}
\bibliography{../bib/books,../bib/papers,../bib/mjh,../bib/library,../bib/ref-gn,../bib/coupling,../bib/pnp}

\vspace*{0.5cm}

\end{document}